\title{Closed form optimized transmission conditions for
  complex diffusion with many subdomains}
\author{V. Dolean\thanks{Department of Mathematics and Statistics, University of Strathclyde, Glasgow, UK, and Laboratoire J.A.~Dieudonn\'e, CNRS, University C\^ote d'Azur, Nice, France. E-mail: {work@victoritadolean.com}.}
\and M. J. Gander\thanks{University of Geneva}   
\and A. Kyriakis\thanks{Department of Mathematics and Statistics, University of Strathclyde, Glasgow, UK, E-mail: {Alexandros.Kyriakis@strath.ac.uk}.}
}
\begin{document}

\maketitle

\begin{abstract}
  Optimized transmission conditions in domain decomposition methods
  have been the focus of intensive research efforts over the past
  decade. Traditionally, transmission conditions are optimized for two
  subdomain model configurations, and then used in practice for many
  subdomains. We optimize here transmission conditions for the first
  time directly for many subdomains for a class of complex diffusion
  problems. Our asymptotic analysis leads to closed form optimized
  transmission conditions for many subdomains, and shows that the
  asymptotic best choice in the mesh size only differs from the two
  subdomain best choice in the constants, for which we derive the
  dependence on the number of subdomains explicitly, including the
  limiting case of an infinite number of subdomains, leading to new
  insight into scalability. Our results include both Robin and
  Ventcell transmission conditions, and we also optimize for the first
  time a two-sided Ventcell condition. We illustrate our results with
  numerical experiments, both for situations covered by our analysis
  and situations that go beyond.
\end{abstract}

\begin{keywords} 
Optimized transmission conditions, complex diffusion, optimized Schwarz
methods, many subdomains.
\end{keywords}

\begin{AMS}
65N55, 65N35, 65F10
\end{AMS}

\pagestyle{myheadings}
\thispagestyle{plain}
\markboth{V. Dolean, M.J. Gander, A. Kyriakis}{DD algorithms for the complex diffusion problem}

\section{Introduction}

Diffusion problems are ubiquitous in science and engineering. While
classical diffusion problems are real, there are also important
complex diffusion problems. For example in geophysics, the
magnetotelluric approximation of Maxwell's equations is a key tool to
extract information about the spatial variation of electrical
conductivity in the Earth's subsurface
\cite{vozoff1991magnetotelluric}. This approximation results in a
complex diffusion equation \cite{Donzelli:2019:ASM} of the form
\begin{equation}\label{eq:1}
  \Delta u - (\eta - i\varepsilon )u = f, \quad \mbox{in a domain $\Omega$},
\end{equation}
where $f$ is the source function, and $\eta$ and $\varepsilon$ are
strictly positive constants\footnote{In the magnetotelluric
  approximation we have $\eta=0$, but we consider the slightly more
  general complex diffusion case here. Note also that the zeroth order
  term in \eqref{eq:1} is much more benign than the zeroth order term
  of opposite sign in the Helmholtz equation, see
  e.g. \cite{Ernst:2012:WDS}.}.
  
We are interested here in designing and analyzing domain decomposition
methods for complex diffusion problems of the form \eqref{eq:1} that
use optimized transmission conditions; for an analysis of the
performance of the classical Schwarz method, see
\cite{donzelli2018schwarz}. Traditionally, such transmission
conditions are derived and optimized for simple two subdomain
configurations, like in optimized Schwarz methods, see
\cite{chen2021optimized, gander2020optimized, brunet2020natural,
  gander2019heterogeneous, dolean2018asymptotic, gander2017optimized,
  gander2016optimized2, gander2016optimizedH, bennequin2016optimized,
  gander2008schwarz,gander2006optimized} and references therein. We
investigate here the optimization of transmission conditions directly
for the many subdomain case, and also study the optimization problem
in the limit when the number of subdomains goes to infinity, using the
tool of limiting spectra, see \cite{Bootland:2021:APS} and the
references therein. Our analysis for overlapping methods shows that
the optimized transmission conditions in the many subdomain case
behave asymptotically when the overlap goes to zero like the optimized
transmission conditions for the two subdomain case, only the
constants, which we derive in closed form for a given number of
subdomains, differ. We optimize both Robin and Ventcell (second order)
transmission conditions, and for the first time also a two-sided
variant of the Ventcell conditions. Our results also hold for the
classical Laplace problem, by simple setting $\eta$ and $\varepsilon$
to zero. We furthermore get from our analysis a new scalability result
for complex diffusion problems, which was first observed in the
context of solvation models in \cite{Stamm3}, and then proved for the
Laplace problem in \cite{ciaramella2017analysis}, and also holds for
other domain decomposition methods in such strip type domain
decomposition settings \cite{Chaouqui:2018:OSC}, see also
\cite{ciaramella2018analysis} for maximum principle techniques and
\cite{ciaramella2018analysisIII} for Lions type projection arguments
in more general geometries. We do not consider decompositions with
cross-points here, for which new techniques that are just in
development now would be needed.

\section{Optimized Schwarz methods for many subdomains}\label{OSMSec}

Our preliminary results in the short conference proceedings paper
\cite{Dolean:2021:OTC} have shown that for Robin transmission
conditions in the magnetotelluric approximation of Maxwell's equation
the asymptotically optimal parameter choice for two and three
subdomains has the same dependence on the overlap parameter when it
becomes small. We then also explored this dependence with numerical
experiments for four, five and six subdomains and the asymptotic
dependence remained the same, only the constants seem to depend on the
number of subdomains.

We prove here that indeed the asymptotic dependence of the optimized
parameters is the same for any number of subdomains, and also derive
the precise constants which themselves have a clear dependence on the
number of subdomains. Using the technique of limiting spectra, we can
even prove this result when the number of subdomains goes to infinity.
We therefore have for the first time a formal proof that the classical
approach of optimizing transmission conditions for a two subdomain
model problem to obtain optimized Schwarz methods is fully justified
for their use on many subdomains. We also show this result for Ventcell
(second order) transmission conditions, and optimize for the first
time a two-sided Ventcell variant.

To study Optimized Schwarz Methods (OSMs) for \eqref{eq:1}, we use a
rectangular domain $\Omega$ given by the union of rectangular
  subdomains $\Omega_j:= ({a_j},{b_j}) \times (0,\hat L)$,
  $j=1,2,\ldots,J$, where $a_j=(j-1)L-\frac{\delta}{2}$ and
  $b_j=jL+\frac{\delta}{2}$, and $\delta$ is the overlap, like in
  \cite{Chaouqui:2018:OSC}. Our OSM computes for iteration index
  $n=1,2,\hdots$
\begin{equation} \label{eq:2}
  \begin{array}{rcll}
  \Delta u_j^{n} -(\eta-i \varepsilon) u_j^{n}&=&f&\mbox{in $\Omega_j$},\\
  -\partial_x u_j^{n}+p_j^{-} u_j^{n}&=&-\partial_x u_{j-1}^{n-1}+p_j^{-}u_{j-1}^{n-1}
  \quad &\mbox{at $x=a_j$}, \\
  \partial_x u_j^{n}+p_j^{+} u_j^{n}&=&\partial_x u_{j+1}^{n-1}+p_j^{+} u_{j+1}^{n-1}
  \quad &\mbox{at $x=b_j$},
\end{array} 
\end{equation}
where $p_j^{-}$ and $p_j^{+}$ are strictly positive parameters in the
so called two-sided OSM, see e.g. \cite{Gander:2007:OSM}, and we have at
the top and bottom homogeneous Dirichlet boundary conditions, and on
the left and right homogeneous Robin boundary conditions, i.e we put
for simplicity of notation $u_0^{n-1}=u_{J+1}^{n-1}=0$ in
\eqref{eq:2}. The Robin parameters are fixed at the domain boundaries
$x=a_1$ and $x=b_J$ to $p_1^-= p_a$ and $p_J^+= p_b$.  By linearity,
it suffices to study the homogeneous equations, $f=0$, and analyze
convergence to zero of the OSM \eqref{eq:2}.  Expanding the
homogeneous iterates in a Fourier series
$$
u_j^{n}(x,y)=\sum_{m=1}^{\infty}
v_j^{n}(x,\tilde{k})\sin(\tilde{k}y),
$$ where
$\tilde{k}=\frac{m\pi}{\hat{L}}$ to satisfy the homogeneous Dirichlet
boundary conditions at the top and bottom, we obtain for the Fourier
coefficients the equations
\begin{equation}\label{eq:6}
  \begin{array}{rcll}
  \partial_{xx}v_j^{n}-(\tilde{k}^{2}+\eta-i\varepsilon)
  v_j^{n}&=&0 & x\in  (a_j,b_j),  \\
  -\partial_x v_j^{n}+p_j^{-} v_j^{n}&=&
  -\partial_x v_{j-1}^{n-1}+p_j^{-} v_{j-1}^{n-1}\quad
  & \mbox{at $x=a_j$},     \\
  \partial_x v_j^{n}+p_j^{+} v_j^{n}&=&
  \partial_x v_{j+1}^{n-1}+p_j^{+} v_{j+1}^{n-1}\quad &
  \mbox{at $x=b_j$}.
\end{array} 
\end{equation}
The general solution of the differential equation is 
$$
  v_j^{n}(x,\tilde{k})=c_j e^{-\lambda(\tilde{k})x}
    +d_j e^{\lambda(\tilde{k})x},
    $$  
    where
$\lambda=\lambda(\tilde{k})=\sqrt{\tilde{k}^{2}+\eta-i\varepsilon}$.
We next define the Robin traces,
\begin{eqnarray*}
\mathcal{R}_{-}^{n-1}(a_j,\tilde{k})&:=& -\partial_x
v_{j-1}^{n-1}(a_{j},\tilde{k})+p_j^{-}v_{j-1}^{n-1}(a_{j},\tilde{k}),\\
\mathcal{R}_{+}^{n-1}(b_j,\tilde{k})&:=& \partial_x
v_{j+1}^{n-1}(b_j,\tilde{k})+p_j^{+}v_{j+1}^{n-1}(b_j,\tilde{k}).
\end{eqnarray*}
Inserting the solution into the transmission conditions in
\eqref{eq:6}, we obtain for the remaining coefficients $c_j$
and $d_j$ the linear system
\begin{align*}
  c_j{e^{ - \lambda {a_j}}}(p_j^{-} + \lambda ) + d_j{e^{\lambda {a_j}}}(p_j^{-} - \lambda ) &=\mathcal{R}_{-}^{n-1}(a_j,\tilde{k}), \\
c_j{e^{ - \lambda {b_j}}}(p_j^{+} - \lambda ) + d_j{e^{\lambda {b_j}}}(p_j^{+} + \lambda ) &= \mathcal{R}_{+}^{n-1}(b_j,\tilde{k}),
\end{align*}
whose solution is
\begin{align*}
  c_j &= \frac{1}{D_j}({e^{\lambda{b_j}}}(p_j^{+} + \lambda )
  \mathcal{R}_{-}^{n-1}(a_j,\tilde{k})
  -e^{\lambda{a_j}}(p_j^{-} - \lambda ){\mathcal{R}_{+}^{n-1}(b_j,\tilde{k})}),
  \tag{8} \label{eq:8}\\
  d_j &= \frac{1}{D_j}( - {e^{ - \lambda {b_j}}}(p_j^{+}- \lambda)
  \mathcal{R}_{-}^{n-1}(a_j,\tilde{k}) + {e^{ - \lambda {a_j}}}(p_j^{-} + \lambda )
  \mathcal{R}_{+}^{n-1}(b_j,\tilde{k})), \tag{9} \label{eq:9}
\end{align*}
where $ D_j:=(\lambda+p_j^{+})(\lambda+p_j^{-})e^{\lambda (L + \delta
  )} - (\lambda-p_j^{+})(\lambda-p_j^{-})e^{ - \lambda (L + \delta
  )}$.  We thus arrive for the Robin traces in the OSM at the
iteration formula
\begin{align*}
{\mathcal{R}_{-}^{n}(a_j,\tilde{k})} 
&= \alpha_j^-\mathcal{R}_{-}^{n - 1}({a_{j-1}},\tilde{k})
+\beta_j^-\mathcal{R}_{+}^{n - 1}({b_{j-1}},\tilde{k}), \, j=2,\ldots,J,\\
{\mathcal{R}_{+}^{n}(b_j,\tilde{k})} 
 &= \beta_j^+\mathcal{R}_{-}^{n - 1}({a_{j+1}},\tilde{k})
 +\alpha_j^+ \mathcal{R}_{+}^{n - 1}({b_{j+1}},\tilde{k}),  \, j=1,\ldots,J-1, 
\end{align*}
where 
\begin{equation}
\label{eq:alphabeta}
\begin{array}{rcll}
\alpha_j^-&:=&\frac{(\lambda+p_{j-1}^{+})(\lambda + p_j^{-})e^{\lambda \delta}-(\lambda-p_{j-1}^{+})(\lambda-p_j^{-})e^{-\lambda \delta}}
{(\lambda\!+\!p_{j-1}^{+})(\lambda\!+\!p_{j-1}^{-})e^{\lambda (L + \delta )}\!-\! (\lambda\!-\!p_{j-1}^{+})(\lambda\!-\!p_{j-1}^{-})e^{ - \lambda (L + \delta )}},& j=2,\ldots,J, \\
\alpha_j^+ &:=&\frac{(\lambda+p_{j+1}^{-})(\lambda + p_j^{+})e^{\lambda \delta}-(\lambda-p_{j+1}^{-})(\lambda-p_j^{+})e^{-\lambda \delta}}
{(\lambda\!+\!p_{j+1}^{+})(\lambda\!+\!p_{j+1}^{-})e^{\lambda (L + \delta )}\!-\! (\lambda\!-\!p_{j+1}^{+})(\lambda\!-\!p_{j+1}^{-})e^{ - \lambda (L + \delta )}},& j=1,\ldots,J-1,\\
\beta_j^{-}&:=&\frac{(\lambda + p_j^-)(\lambda - p_{j-1}^-) e^{-\lambda L}-(\lambda -p_j^-)(\lambda + p_{j-1}^-) e^{\lambda L}}
{(\lambda\!+\!p_{j-1}^{+})(\lambda\!+\!p_{j-1}^{-})e^{\lambda (L + \delta )}\!-\! (\lambda\!-\!p_{j-1}^{+})(\lambda\!-\!p_{j-1}^{-})e^{ - \lambda (L + \delta )}},& j=2,\ldots,J,\\
\beta_j^{+}&:=&\frac{(\lambda + p_j^+)(\lambda - p_{j+1}^+) e^{-\lambda L}-(\lambda -p_j^+)(\lambda + p_{j+1}^+) e^{\lambda L}}
{(\lambda\!+\!p_{j+1}^{+})(\lambda\!+\!p_{j+1}^{-})e^{\lambda (L + \delta )}\!-\! (\lambda\!-\!p_{j+1}^{+})(\lambda\!-\!p_{j+1}^{-})e^{ - \lambda (L + \delta )}},& j=1,\ldots,J-1.
\end{array}
\end{equation}
Defining the $2\times 2$ matrices
$$
  T_j^1:=\left[ {\begin{array}{cc}
    \alpha_j^-&\beta_j^{-}\\
    0&0
  \end{array}} \right], \, j=2,..,J \quad \mbox{and}\quad 
  T_j^2:=\left[ {\begin{array}{cc}
  0&0\\
\beta_j^{+}&\alpha_j^+
\end{array}} \right],\, j=1,..,J-1,
$$
we can write the OSM in substructured form (keeping the
  first and last rows and columns to make the block structure
  appear), namely
\begin{equation}\label{SubstructuredForm}
\underbrace{\left[ {\begin{array}{c}
0\\
\mathcal{R}_{+}^n({b_1},\tilde{k})\\
\mathcal{R}_{-}^n({a_2},\tilde{k})\\
\mathcal{R}_{+}^n({b_2},\tilde{k})\\
 \vdots \\
\mathcal{R}_{-}^n({a_j},\tilde{k})\\
\mathcal{R}_{+}^n({b_j},\tilde{k})\\
 \vdots \\
\mathcal{R}_{-}^n({a_{N - 1}},\tilde{k})\\
\mathcal{R}_{+}^n({b_{N - 1}},\tilde{k})\\
\mathcal{R}_{-}^n({a_N},\tilde{k})\\
0
    \end{array}} \right] }_\text{${{{\cal R} }^n}$}=\underbrace{ \left[\arraycolsep0.5em {\begin{array}{ccccccc}
        \\[-0.2em]
 &T_1^2& & & & &  \\[0.7em]
T_2^1& &T_2^2& & & &  \\[0.7em]
 &\ddots& &\ddots& & &  \\[0.7em]
 & &T_j^1& &T_j^2& &  \\[0.7em]
 & & &\ddots& &\ddots&  \\[0.7em]
 & & & &T_{N-1}^1& &T_{N-1}^2 \\[0.7em]
 & & & & &T_N^1& \\[1em]
\end{array}} \right]}_\text{$T$}  \underbrace{\left[ {\begin{array}{*{20}{c}}
0\\
\mathcal{R}_{+}^{n - 1}({b_1},\tilde{k})\\
\mathcal{R}_{-}^{n - 1}({a_2},\tilde{k})\\
\mathcal{R}_{+}^{n - 1}({b_2},\tilde{k})\\
 \vdots \\
\mathcal{R}_{-}^{n - 1}({a_j},\tilde{k})\\
\mathcal{R}_{+}^{n - 1}({b_j},\tilde{k})\\
 \vdots \\
\mathcal{R}_{-}^{n - 1}({a_{N - 1}},\tilde{k})\\
\mathcal{R}_{+}^{n - 1}({b_{N - 1}},\tilde{k})\\
\mathcal{R}_{-}^{n - 1}({a_N},\tilde{k})\\
0
    \end{array}} \right]}_\text{${{{\cal R} }^{n-1}}$}.
\end{equation}
If the parameters $p_j^{\pm}$ are constant over all the interfaces,
and we eliminate the first and the last row and column of $T$, $T$
becomes a block Toeplitz matrix. The best choice of the parameters
minimizes the spectral radius $\rho(T)$ over a numerically relevant
range of frequencies $K:=[\tilde{k}_{\min},\tilde{k}_{\max}]$ with
$\tilde{k}_{\min}:=\frac{\pi}{\hat{L}}$ and
$\tilde{k}_{\max}:=\frac{M\pi}{\hat{L}}$, $M\sim\frac{1}{h}$, where
$h$ is the mesh size, and is thus solution of the min-max problem
  $$
  \min_{p_j^{\pm}}\max_{\tilde{k}\in
    K}|\rho(T(\tilde{k},p_j^{\pm}))|.
    $$ 
\begin{remark}
  This formulation is the most generic possible and the convergence
  factor in all the other particular cases can be derived from here:
  \begin{itemize}
    \item Dirichlet boundary conditions at $x=a_1$ and $x=b_J$ when
      $p_a,p_b\rightarrow \infty$.
    \item Dirichlet transmission conditions at the interfaces between
      subdomains when $p_j^-\rightarrow \infty,\, j=2,...,J$ and
      $p_j^+\rightarrow \infty,\, j=1,...,J-1$.
    \item The one dimensional case when $\lambda$ is replaced by
      $\lambda(0) = \sqrt{\eta - i\varepsilon}$.
  \end{itemize}
\end{remark}    

\section{Optimized Robin transmission conditions}\label{RobinSec}

We first state without proof the results obtained in the short
conference proceedings paper \cite{Dolean:2021:OTC} in the two
subdomain case before presenting our new results for the case of an
arbitrary number of subdomains.
\begin{theorem}[Two Subdomain Robin Optimization]
  Let $s:=\sqrt{\tilde{k}^2_{\min}+\eta - i\varepsilon}$, where
  the complex square root is taken with the positive real part, and
  let $K$ be the real constant
  \label{Theorem2sub}
  \begin{equation}
  \label{eq:K}
    K := \Re \frac{s ((p_b + s)(p_a + s)-(s - p_b)(s - p_a)
      e^{-4sL} )}{((s - p_a)e^{-2sL}+ s + p_a)((s - p_b)e^{-2sL} + s +
      p_b)}.
  \end{equation}
  Then for two subdomains with one sided Robin transmission
  conditions, $p_1^{+}=p_2^{-}=:p$, the asymptotically optimized
  parameter $p$ for small overlap $\delta$ and associated convergence
  factor are
  \begin{equation}\label{eq:p2dom}
    p\sim 2^{-1/3}K^{2/3} \delta^{-1/3}, \quad
    \rho = 1 - 2^{4/3}K^{1/3} \delta^{1/3} + {\cal O}(\delta^{2/3}).
  \end{equation} 
  For two-sided Robin transmission conditions, $p_1^{+}\ne p_2^{-}$,
  the asymptotically optimized parameters for small overlap $\delta$
  and associated convergence factor are
  \begin{equation}\label{eq:2p2dom}
    p_1^+\sim  2^{-2/5} K^{2/5} \delta^{-3/5},\, p_2^-\sim  2^{-4/5} K^{4/5} \delta^{-1/5},\, \rho= 1- 2^{4/5}K^{1/5}  \delta^{1/5} + {\cal O}(\delta^{2/5}).
\end{equation} 
\end{theorem}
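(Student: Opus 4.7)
The plan is to specialise the substructured formulation \eqref{SubstructuredForm} to $J=2$, where the iteration reduces to a $2\times 2$ system coupling only $\mathcal R_+^n(b_1,\tilde k)$ and $\mathcal R_-^n(a_2,\tilde k)$; the remaining Robin traces at $x=a_1,b_J$ are determined by the fixed boundary parameters $p_a,p_b$. Using the closed-form coefficients in \eqref{eq:8}--\eqref{eq:9} and eliminating $c_j,d_j$, I would assemble the iteration matrix $T(\tilde k,p)$ and compute its spectral radius $\rho(\tilde k,p)$ explicitly. The constant $K$ in \eqref{eq:K} should then emerge as the leading-order coefficient in the small-$\delta$ expansion of $1-\rho(\tilde k_{\min},p)$ around the ``resonant'' point $p=s$: the particular combination of $p_a,p_b,s,e^{-2sL}$ in \eqref{eq:K} is precisely what one obtains after substituting $\lambda(\tilde k_{\min})=s$ into the bounded-domain factors of $\rho$ and taking the real part to ensure positivity.

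\textbf{One-sided optimisation by equioscillation.} Once $\rho(\tilde k,p)$ is available, I would solve the min-max problem $\min_{p>0}\max_{\tilde k\in K}\rho(\tilde k,p)$ via the standard OSM equioscillation argument. For fixed large $p$, the map $\tilde k\mapsto\rho(\tilde k,p)$ should have a maximum at the boundary frequency $\tilde k_{\min}$ and an interior maximum $\bar k(p)$ determined to leading order by $|\lambda(\bar k)|=p$. Setting $\rho(\tilde k_{\min},p)=\rho(\bar k(p),p)$, inserting the ansatz $p=c\,\delta^{-\alpha}$, and expanding both sides as $\delta\to 0$ forces $\alpha=1/3$ by matching powers of $\delta$; matching the leading constants then gives $c=2^{-1/3}K^{2/3}$ and the expansion $\rho=1-2^{4/3}K^{1/3}\delta^{1/3}+\mathcal O(\delta^{2/3})$ stated in \eqref{eq:p2dom}.

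\textbf{Two-sided case.} For the two-sided variant with $p_1^+\ne p_2^-$, the spectral radius of $T$ involves the symmetric combination $\bigl((\lambda-p_1^+)(\lambda-p_2^-)\bigr)/\bigl((\lambda+p_1^+)(\lambda+p_2^-)\bigr)$ multiplied by the same bounded-domain factor, so there is one additional free parameter. Equioscillation is now expected at three frequencies in $K$, giving two balance equations; inserting the ansatz $p_1^+=c_1\delta^{-\alpha_1}$, $p_2^-=c_2\delta^{-\alpha_2}$ and matching leading powers of $\delta$ yields a linear system whose solution is $(\alpha_1,\alpha_2)=(3/5,1/5)$, and the constants $c_1,c_2$ then follow from the coefficient equations, producing \eqref{eq:2p2dom}.

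\textbf{Main obstacle.} The principal technical difficulty is the complex nature of $\lambda=\sqrt{\tilde k^2+\eta-i\varepsilon}$: the classical monotonicity and positivity arguments that justify equioscillation in the real Laplace case do not apply verbatim, and one must work with $|\rho|^2$ rather than $\rho$ and carefully track real and imaginary parts. This is exactly why $K$ must appear as the real part of a complex rational expression in \eqref{eq:K} rather than as a modulus. A secondary difficulty is verifying that the interior critical points identified by the asymptotic balance are genuine global maxima on the relevant frequency interval for $\delta$ small enough, and that the bounded-domain correction does not introduce spurious maxima that would change the asymptotic balance.
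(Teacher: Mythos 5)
Your overall strategy---assembling the exact $2\times 2$ substructured iteration for $J=2$, computing $\rho(\tilde k,p)$ in closed form, and then determining the parameters by equioscillation between $\tilde k_{\min}$ and interior maxima via a power-law ansatz in $\delta$---is the right one: it is exactly the route of \cite{Dolean:2021:OTC}, which the paper cites in lieu of a proof of Theorem~\ref{Theorem2sub}, and the same machinery the paper deploys in Lemma~\ref{GenericLemma} and Theorem~\ref{Theorem2sub2par}. The exponents and constants you quote are also the correct ones. However, two of the concrete mechanisms you propose are wrong and would derail the derivation if carried out as written. First, the interior maximum $\bar k(p)$ is \emph{not} located where $|\lambda(\bar k)|=p$: there the reflection factor $|(\lambda-p)/(\lambda+p)|$ is nearly zero, so that is (close to) the \emph{minimum} of $\tilde k\mapsto\rho(\tilde k,p)$, and equioscillating with a value near $0$ yields no balance at all. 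The interior maximum comes instead from the competition between $|(\lambda-p)/(\lambda+p)|$, which increases back toward $1$ for $\tilde k\gg p$, and the overlap decay $e^{-\Re\lambda\,\delta}$; setting $\partial_{\tilde k}\log\rho=0$ gives to leading order $\bar k^2=p^2+2p/\delta$, hence $\bar k\sim\sqrt{2C_p}\,\delta^{-2/3}$ when $p=C_p\delta^{-1/3}$ and $\rho(\bar k)\sim 1-2\sqrt{2C_p}\,\delta^{1/3}$. It is this relation, not $|\lambda(\bar k)|=p$, that produces the exponent $1/3$.

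Second, $K$ does not arise from an expansion ``around the resonant point $p=s$'': the optimized $p$ tends to infinity as $\delta\to0$ while $s$ stays $O(1)$, so what is needed is the large-$p$ expansion of the exact two-subdomain convergence factor at the lowest frequency, $\rho(\tilde k_{\min},p)=|1-2\Lambda/p|+{\cal O}(p^{-2})=1-\tfrac{2}{p}\Re\Lambda+{\cal O}(p^{-2})$, where $\Re\Lambda$ is precisely the constant $K$ of \eqref{eq:K}; the real part enters because $|1-w|=1-\Re w+{\cal O}(|w|^2)$ for small complex $w$, which is the correct version of your remark about moduli versus real parts. With these two corrections the equioscillation $\rho(\tilde k_{\min},p)=\rho(\bar k,p)$ reads $2K/C_p=2\sqrt{2C_p}$, giving $C_p=2^{-1/3}K^{2/3}$ and $\rho=1-2^{4/3}K^{1/3}\delta^{1/3}+{\cal O}(\delta^{2/3})$, and the two-sided case follows the same pattern with three equioscillation points. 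As it stands, your proposal asserts the exponents and constants but the specific balance conditions you give would not produce them.
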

When Dirichlet BCs are used at $x=a_1$ and $x=b_J$, i.e when $p_a$ and
$p_b$ tend to infinity, then the expression of the constant can be
further simplified to
$$
K:=\Re \frac{ s( e^{2sL} + 1)}{(e^{2sL} -1)}. 
$$
The results of this kind of optimization are illustrated in
\Cref{fig:2sdopt} where we see that optimal values are obtained when
the convergence factor equioscillates.
\begin{figure}
  \centering
  \includegraphics[width=0.42\textwidth]{./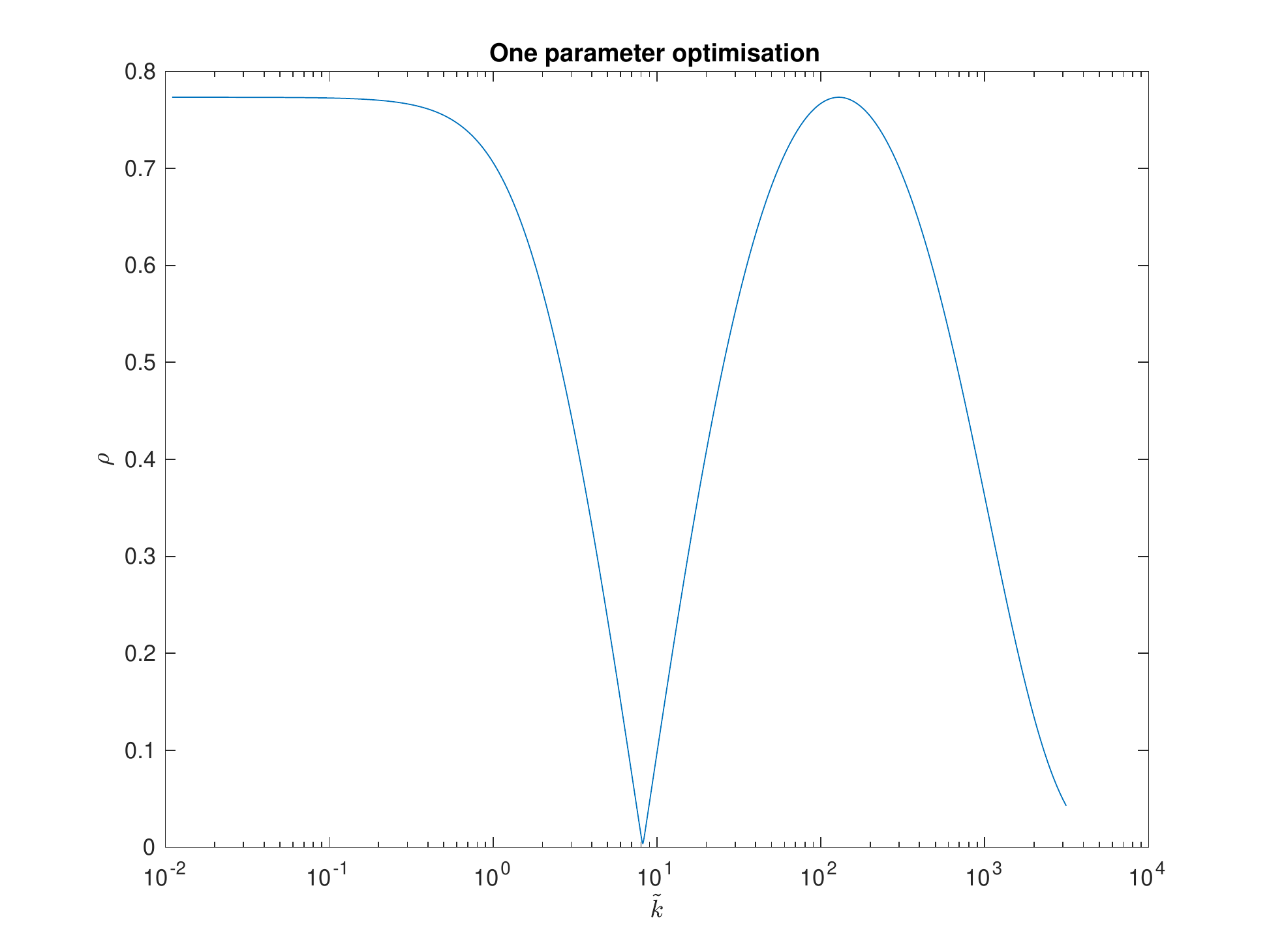}
  \includegraphics[width=0.42\textwidth]{./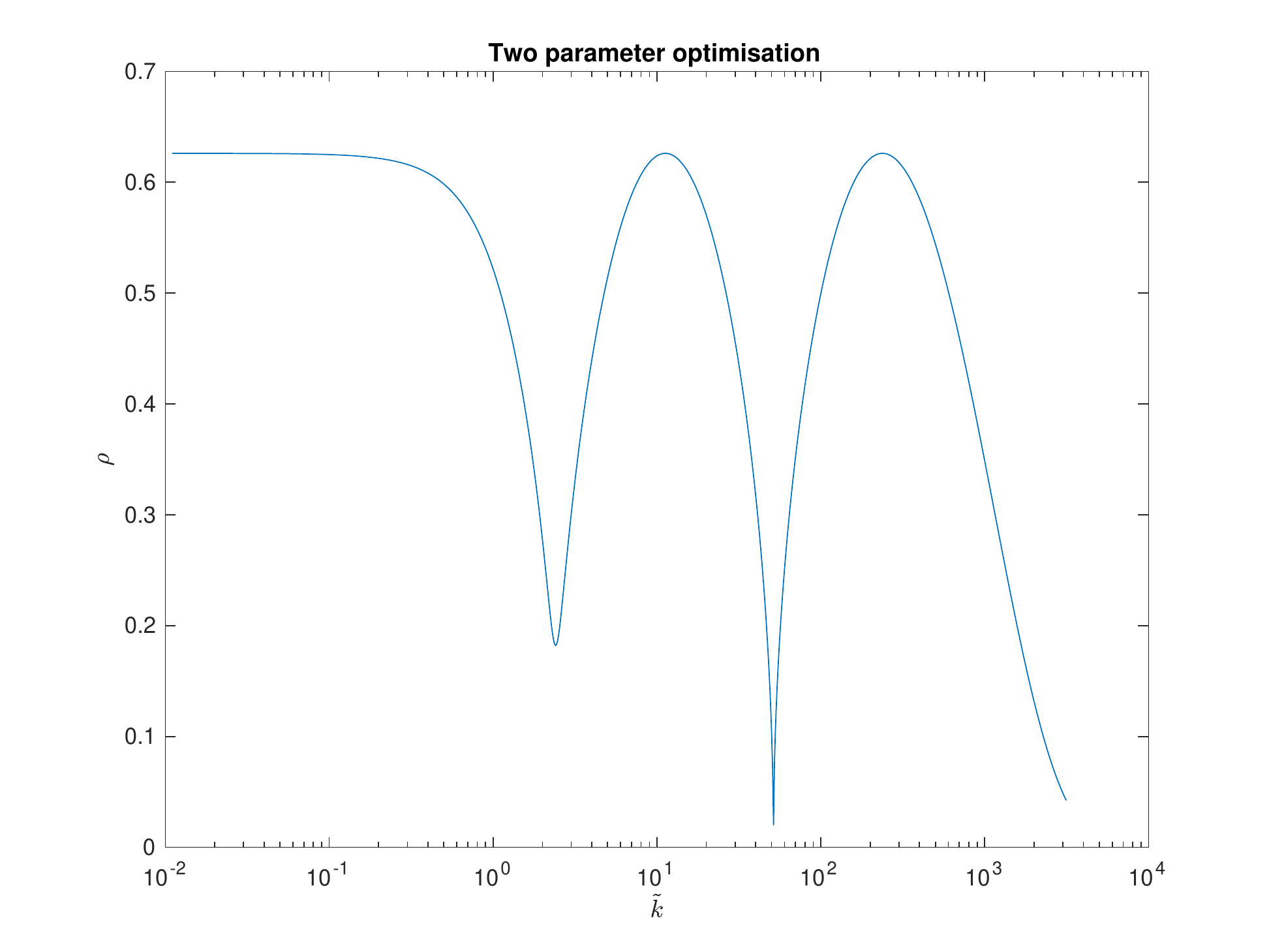}
  \caption{Equioscillation in numerical optimization with one and two
    optimized parameters.}
  \label{fig:2sdopt}
\end{figure}


\subsection{High frequency approximation of $T$ and $\rho$}

It is not possible to directly tackle the optimization of transmission
conditions for the many subdomain case, the spectral radius of the
iteration matrix $T$ in \eqref{SubstructuredForm} is too complex an
object. There is however an important observation: in the optimization
process, we see in Figure \ref{fig:2sdopt} that the convergence
factor, i.e. the spectral radius of $T$, equioscillates at different
frequency points $\tilde{k}$, and the local maximum points are for
$\tilde{k}$ large, which motivates the interest of the following Lemma
(see also \cite{gander2017optimized, chen2021optimized} for similar
high frequency approximations):
\begin{lemma}[High frequency approximation of $\rho$]
  For high frequencies, $\tilde{k}$ large, the convergence factor for
  $p_j^+=p_{j}^-=p$ behaves like
  \begin{equation}\label{rho1hf}
    \rho\sim \rho_{1,hf} = \left| \frac{\lambda-p}{\lambda+p} \right|
      e^{-\lambda \delta}, 
  \end{equation}
  and for $ p_j^+, \, p_{j+1}^-\in \{ p_1,p_2\}$ and $p_j^+ \ne
  p_{j+1}^-,\, j=1,\ldots,J-1$, it behaves like
  \begin{equation}\label{rho2hf}
  \rho^2\sim \rho^2_{2,hf} = \left| \frac{\lambda-p_1}{\lambda+p_1}\cdot
  \frac{\lambda-p_2}{\lambda+p_2} \right | e^{-2\lambda \delta}.
  \end{equation}
\end{lemma}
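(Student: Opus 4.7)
The plan is to exploit the exponential separation of scales appearing in the coefficients $\alpha_j^\pm$ and $\beta_j^\pm$ of \eqref{eq:alphabeta} when $\tilde k$ is large. Because $\lambda=\sqrt{\tilde k^{2}+\eta-i\varepsilon}\to\infty$ and $L>\delta$, every denominator is governed by its $e^{\lambda(L+\delta)}$ term, each $\alpha$-numerator by its $e^{\lambda\delta}$ part, and each $\beta$-numerator by its $e^{\lambda L}$ part. The resulting simplification gives $\alpha_j^\pm=O(e^{-\lambda L})$, $\beta_j^-\sim -\frac{\lambda-p_j^-}{\lambda+p_{j-1}^+}e^{-\lambda\delta}$, and $\beta_j^+\sim -\frac{\lambda-p_j^+}{\lambda+p_{j+1}^-}e^{-\lambda\delta}$, so that $|\alpha_j^\pm|=o(|\beta_j^\pm|)$ by a factor $e^{-\lambda(L-\delta)}$.

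Next, I would introduce the modified matrix $\tilde T$ obtained from $T$ by setting every $\alpha_j^\pm$ to zero. The surviving iteration reads $\mathcal{R}_+^n(b_j)=\beta_j^+\mathcal{R}_-^{n-1}(a_{j+1})$ and $\mathcal{R}_-^n(a_{j+1})=\beta_{j+1}^-\mathcal{R}_+^{n-1}(b_j)$, so $\tilde T$ decouples into $J-1$ independent $2\times 2$ anti-diagonal blocks, one per interface, whose eigenvalues are $\pm\sqrt{\beta_j^+\beta_{j+1}^-}$. For one-sided Robin $p_j^\pm=p$ every product collapses to $\beta^{2}$, giving $\rho(\tilde T)=|\beta|$ and reproducing \eqref{rho1hf}. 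For the two-sided alternation $\{p_j^+,p_{j+1}^-\}=\{p_1,p_2\}$ the product simplifies to $\frac{(\lambda-p_1)(\lambda-p_2)}{(\lambda+p_1)(\lambda+p_2)}e^{-2\lambda\delta}$, which matches \eqref{rho2hf}.

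Finally, I would pass from $\rho(\tilde T)$ to $\rho(T)$ using continuity of the spectrum. The matrix size $2J$ is fixed and the restored entries $\alpha_j^\pm$ are exponentially smaller than the retained $\beta_j^\pm$, so a standard bound such as Elsner's inequality shifts $\rho$ by an amount of smaller order than $e^{-\lambda\delta}$ for $\tilde k$ large enough, yielding $\rho(T)\sim\rho(\tilde T)$. I expect this last perturbation step to be the main obstacle, because the pair blocks of $\tilde T$ are purely off-diagonal and in principle sensitive to small diagonal-type corrections coming from the $\alpha$ couplings between neighbouring pairs; a clean way to avoid any abstract loss is to expand the characteristic polynomial of $T$ in its block form and check by inspection that the $\alpha$ contributions enter only at exponentially subleading order, which then directly delivers \eqref{rho1hf} and \eqref{rho2hf}.
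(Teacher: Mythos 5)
Your proposal is correct and follows essentially the same route as the paper: the coefficients $\alpha_j^\pm$ are dropped as exponentially small for $\tilde k$ large, the remaining matrix is block anti-diagonal with $2\times 2$ blocks whose eigenvalues are $\pm\sqrt{\beta_j^+\beta_{j+1}^-}$, and \eqref{rho1hf} and \eqref{rho2hf} follow directly. The only difference is that you spell out the perturbation step from the truncated matrix back to $T$, which the paper treats informally by writing $T\sim T_{hf}$; this is a welcome but minor refinement, not a different approach.
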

\begin{proof}
  When $\tilde{k}$ is large, the real part of $\lambda(\tilde{k})$ is
  large as well, and from \Cref{eq:alphabeta} we obtain because of the
  terms $e^{\lambda L}$ that for $\tilde{k}\rightarrow\infty$
$$
  \alpha_j^{\pm}(\tilde{k}) \sim 0,\quad
  \beta_j^-(\tilde{k})\sim-\frac{\lambda -p_j^-}{\lambda+p_{j-1}^{+}}
    e^{-\lambda \delta},\quad
  \beta_j^+(\tilde{k})\sim-\frac{\lambda -p_j^+}{\lambda\!+\!p_{j+1}^{-}} e^{-\lambda \delta}.
  $$
  The iteration matrix $T$ thus behaves for $\tilde{k}$ large like
\begin{equation}
  T\sim T_{hf}= \left[ \begin{array}{cccccccc}
0 & \beta_1^+ &0  & 0 & \hdots  & 0 & 0 \\
\beta_2^-& 0 & 0 & 0 & \hdots & 0 & 0  \\
0 & 0 & 0 & \beta_2^+ & \hdots & 0 & 0  \\
0 &0 &\beta_3^-&0 & \hdots & 0 & 0 \\
0 & 0&  \hdots & \ddots & \ddots & 0 & 0\\
0 & 0&  \hdots & 0 & 0 & 0 & \beta_{J-1}^+\\
0 &0 & \hdots & 0 & 0 & \beta_J^- & 0 
\end{array} \right].
\end{equation}
The eigenvalues of this matrix are given by the pairs $\pm
\sqrt{\beta_j^+\beta_{j+1}^-},\, j=1,..,J-1$ and therefore the high
frequency convergence factor is
$$
  \rho_{hf} = \max_j \left| \sqrt{\frac{(\lambda -p_j^-)(\lambda -p_j^+)}{(\lambda\!+\!p_{j-1}^{+})(\lambda\!+\!p_{j+1}^{-})}} \right | e^{-\lambda \delta},
$$
which leads to the result of the lemma.
\end{proof}

This result can also be understood intuitively: the coefficients
$\alpha_j^{\pm}$ relate interface values across subdomains, while the
coefficients $\beta_j^{\pm}$ relate interface values across the
overlap only, which is much smaller than the subdomain size. Since
high frequencies are damped rapidly in a diffusion problem over
spatial distance, only the terms $\beta_j^{\pm}$ related to the small
overlap remain relevant for the high frequency behavior of the
algorithm. This is why we see in \Cref{rho1hf} and \Cref{rho2hf} the
typical two subdomain convergence factors, see
e.g. \cite{gander2006optimized}, i.e in optimized Schwarz methods with
many subdomains, high frequencies still converge like if there were
only two subdomains.

\subsection{Optimization for $N$ subdomains}

The high frequency behavior of the convergence factor allows us to
study systematically the asymptotic form of the best parameter choice
for $N$ subdomains, depending on one remaining constant only:
\begin{lemma}[Generic optimized Robin asymptotics]\label{GenericLemma}
  The best choice in the one sided Robin transmission conditions is
  $p_j^+=p_{j}^-=p^*$, and when the overlap $\delta$ goes to zero, we
  have
  \begin{equation}
    p^*=\frac{C_k^2}{2} \delta^{-1/3}\quad\Longrightarrow\quad
    \rho^* \sim 1 - 2C_k \delta ^{1/3},
  \end{equation}
  where the constant $C_k$ depends on the number of subdomains. In the
  two-sided Robin transmission condition, the best choice is $ p_j^+,
  \, p_{j+1}^-\in \{ p_1^*,p_2^*\}$, $p_j^+ \ne p_{j+1}^-,\,
  j=1,\ldots,J-1$, with
  \begin{equation}
   p_1^* = C_{k_2}^2  \delta^{-3/5}\quad p_2^* = C_{k_2}^4\delta^{-1/5}
  \quad\Longrightarrow\quad
  \rho^* \sim 1 - 2C_{k_2} \delta ^{1/5}.
  \end{equation}
\end{lemma}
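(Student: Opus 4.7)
The plan is to apply the standard asymptotic min--max machinery of optimized Schwarz to the high-frequency approximations $\rho_{1,hf}$ and $\rho_{2,hf}$ from the previous lemma, and to match them against the low-frequency end $\tilde{k}=\tilde{k}_{\min}$, where the number of subdomains enters only through a multiplicative constant. The scaling exponents in $\delta$ are entirely governed by the overlap factor $e^{-\lambda\delta}$ competing with the reflection factor $|(\lambda-p)/(\lambda+p)|$, exactly as in the two-subdomain case \eqref{eq:p2dom}--\eqref{eq:2p2dom}; this explains why Theorem \ref{Theorem2sub} and the present lemma have the same exponents but differ only in constants.

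For the one-sided case, I would study $\rho_{1,hf}(p,\tilde{k})=|(\lambda-p)/(\lambda+p)|e^{-\lambda\delta}$ as a function of $\tilde{k}$ on $[\tilde{k}_{\min},\tilde{k}_{\max}]$. For $p$ large, $|(\lambda-p)/(\lambda+p)|$ is increasing in $\tilde{k}$ while $e^{-\lambda\delta}$ is decreasing, so $\rho_{1,hf}$ has a unique interior maximizer $\tilde{k}^\ast$. Solving $\partial_{\tilde{k}}\rho_{1,hf}=0$ asymptotically as $p\to\infty$ yields $\lambda(\tilde{k}^\ast)\sim\sqrt{2p/\delta}$ and the corresponding maximum value $1-2\sqrt{2p\delta}+{\cal O}(\delta)$. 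Equioscillation between the interior and low-frequency maxima, $\rho_{1,hf}(p,\tilde{k}^\ast)=\rho_{1,hf}(p,\tilde{k}_{\min})$, then reduces at leading order to a single scalar equation in $p$ and one $N$-dependent constant $C_k$ that encapsulates the low-frequency part of $\rho(T)$. Solving the resulting relation gives $p^\ast\sim \tfrac{1}{2}C_k^2\delta^{-1/3}$ and $\rho^\ast\sim 1-2C_k\delta^{1/3}$.

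The two-sided case proceeds analogously, but now $\rho_{2,hf}^2$, whose numerator absolute value is quadratic in $\lambda$, admits two interior critical points $\tilde{k}_1^\ast$ and $\tilde{k}_2^\ast$. The optimum is characterized by three-point equioscillation between $\tilde{k}_{\min}$, $\tilde{k}_1^\ast$, and $\tilde{k}_2^\ast$, producing a system of three equations for the three unknowns $p_1,p_2$ and the common equioscillation value. Balancing the orders of magnitude as $\delta\to 0$ forces a two-scale ansatz $p_1^\ast\sim c_1 \delta^{-3/5}$, $p_2^\ast\sim c_2\delta^{-1/5}$; inserting this ansatz and keeping leading terms closes the system, giving the claimed $C_{k_2}^2\delta^{-3/5}$, $C_{k_2}^4\delta^{-1/5}$ with $\rho^\ast\sim 1-2C_{k_2}\delta^{1/5}$.

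The main obstacle will be justifying that the high-frequency surrogate $\rho_{hf}$ really controls the full $\rho(T)$ near the equioscillation points. At the interior maxima $\tilde{k}_j^\ast$, which diverge as $\delta\to 0$, validity follows directly from the previous lemma. At $\tilde{k}_{\min}$, however, $\rho(T)$ is not asymptotically equal to $\rho_{hf}$; this is precisely where the block-Toeplitz structure of $T$ in \eqref{SubstructuredForm} produces a spectral radius that depends on $N$, and it is this value that is absorbed into the constants $C_k$ and $C_{k_2}$. The present lemma is therefore "generic" in the sense that it isolates the $\delta$-scaling, and the explicit $N$-dependence of $C_k$ and $C_{k_2}$ (including the limit $N\to\infty$ via limiting spectra) is left to the subsequent theorems of this section.
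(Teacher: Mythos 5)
Your proposal is correct and follows essentially the same route as the paper: the paper's proof simply cites the known asymptotics of the high-frequency convergence factors $\rho_{1,hf}$ and $\rho_{2,hf}$ from \cite{gander2006optimized} and \cite[Proof of Theorem 1]{Dolean:2021:OTC} (optimal $p\sim C_p\delta^{-1/3}$ with interior maximum at $k^*\sim C_k\delta^{-2/3}$, the relation $C_p=C_k^2/2$, and the analogous two-scale ansatz in the two-sided case), whereas you rederive these stationarity and equioscillation relations explicitly. Your closing observation --- that the high-frequency surrogate is only valid at the diverging interior maxima while the $N$-dependence enters exclusively through $\rho(T)(\tilde{k}_{\min})$ and is absorbed into $C_k$, $C_{k_2}$ --- is exactly the division of labour the paper uses between this lemma and the subsequent computation of $K_J$.
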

\begin{proof}
  In the one parameter case, we know from \cite{gander2006optimized}
  that for $\rho_{1,hf}$ from \Cref{rho1hf} the optimal parameter
  $p^*= C_p \delta^{-1/3}$, and a local maximum of $\rho_{1,hf}$ can
  be found at $k^* = C_k \delta^{-2/3}$. The relation between the two
  constants is $C_p = \frac{C_k^2}{2}$, as shown in detail in
  \cite[Proof of Theorem 1]{Dolean:2021:OTC}, and the maximum of
  the convergence factor is
  \begin{equation} \label{eq:rhohf}
     \rho(k^*) = 1 - 2C_k \delta ^{1/3} + {\cal O}(\delta^{2/3}),
  \end{equation}
  which proves the first claim.

  For two-sided Robin transmission conditions, the optimal parameters
  for the high frequency approximation $\rho_{2,hf}$ of the
  convergence factor from \Cref{rho2hf} were studied in
  \cite{gander2006optimized}, and they verify $p_1^*, p_2^*\in \{
  C_{k_2}^2 \delta^{-3/5}, \,C_{k_2}^4 \delta^{-1/5}\}$, with the
  corresponding maximum of the convergence factor given by
  \begin{equation}\label{eq:rhohf2}
    \rho(k^*) = 1 - 2C_{k_2} \delta ^{1/5}+ {\cal O}(\delta^{2/5}),
  \end{equation}
  which proves the second claim.
\end{proof}

It remains to study the constants $C_k$ and $C_{k_2}$, which are
determined by equioscillation with the low frequency convergence
factor, i.e. $\rho(\tilde k_{min})$, see Figure \ref{fig:2sdopt}, and
which depends on the number of subdomains, since we really need to
evaluate the spectral radius of the iteration matrix $T$ in
\eqref{SubstructuredForm}. To simplify the computations, we assume
Dirichlet boundary conditions at the outer boundaries of the global
domain, that is consider the limits when $p_a$ and $p_b$ go to
infinity.  We start by computing the leading order terms in $T$ for
small overlap $\delta$. For one sided Robin transmission conditions, $p
= C_p \delta^{-1/3} = \frac{C_k^2}{2} \delta^{-1/3}$, we obtain, with
$s:=\sqrt{\tilde{k}_{\min}^2+\eta - i\varepsilon}$,
$$
\begin{array}{c}
\displaystyle \alpha_j^+(\tilde{k}_{\min}) = \alpha_j^-(\tilde{k}_{\min}) = \frac{4 s e^{-sL}}{C_p (1- e^{-2sL} )} \delta^{1/3} =  \frac{8 s e^{-sL}}{C_k^2 (1- e^{-2sL} )} \delta^{1/3} := \tilde a,\\
\displaystyle \beta_j^+(\tilde{k}_{\min}) = \beta_j^-(\tilde{k}_{\min}) = 1 - \frac{2 s( e^{-2sL} + 1)}{C_p (1- e^{-2sL} )}\delta^{1/3} = 1 - \frac{4 s( e^{-2sL} + 1)}{C_k^2 (1- e^{-2sL} )}\delta^{1/3}=: \tilde b,
\end{array}
$$
which leads to the simplified low frequency iteration matrix
\begin{equation}
\label{eq:tlf1}
T_{lf,1 par}= \left[ \begin{array}{cccccccc}
0 & \tilde b & \tilde a & 0 & \hdots  & 0 & 0 \\
\tilde b & 0 & 0 & 0 & \hdots & 0 & 0  \\
0 & 0 & 0 & \tilde b & \hdots & 0 & 0  \\
0 & \tilde a &\tilde b&0 & \hdots & \tilde a & 0 \\
0 & 0&  \hdots & \ddots & \ddots & 0 & 0\\
0 & 0&  \hdots & 0 & 0 & 0 & \tilde b\\
0 &0 & \hdots & 0 & \tilde a & \tilde b & 0 
\end{array} \right].
\end{equation}
By computing the spectral radius of this matrix for $J=2,3,4,\ldots$
subdomains, we get for small overlap $\delta$
\begin{equation}\label{eq:const}
  \arraycolsep0.1em
\begin{array}{rcl}
  \rho_2(\tilde{k}_{\min}) &=& 1 - \frac{4}{C_k^2 }\Re \frac{s( e^{2sL} + 1)}{(e^{2sL} -1)}\delta^{1/3}, \\  
  \rho_3(\tilde{k}_{\min}) &=& 1 - \frac{4}{C_k^2 }\Re \frac{s( e^{2sL} + 1 -e^{sL} )}{(e^{2sL} -1)}\delta^{1/3}, \\
  \rho_4(\tilde{k}_{\min}) &=& 1 - \frac{4}{C_k^2 }\Re \frac{s( e^{2sL} + 1 -\sqrt{2}e^{sL} )}{(e^{2sL} -1)}\delta^{1/3},\\
  &\vdots&\\
  \rho_{J}(\tilde{k}_{\min}) &=& 1 - \frac{4}{C_k^2 } \Re \frac{ s( e^{2sL}+1 -
    2\cos\left(\frac{\pi}{J}\right) e^{sL})}{(e^{2sL} -1)}
  \delta^{1/3}.
\end{array}
\end{equation}
Now defining the new constant that appears,
\begin{equation}\label{KJ}
K_{J}:= \Re \frac{ s( e^{2sL}+1 - 2\cos\left(\frac{\pi}{J}\right) e^{sL})}{(e^{2sL} -1)},
\end{equation}
we obtain $\rho_{J}(\tilde{k}_{\min})\sim
1-\frac{4K_J}{C_k^2}\delta^{1/3}$, and equating this with the high
  frequency maximum $\rho(k^*)\sim 1 - 2C_k \delta ^{1/3}$ from
  \Cref{eq:rhohf} leads to
\begin{equation}\label{CK}
  C_k=(2K_J)^{1/3}.
\end{equation}

For two-sided Robin transmission conditions, $p_1,p_2 \in\{ C_{p_1}
\delta^{-1/5}, C_{p_2} \delta^{-3/5} \} = \{C_{k_2}^2 \delta^{-1/5} ,
C_{k_2}^4 \delta^{-1/5}\}$, we obtain
$$
\begin{array}{c}
\displaystyle \alpha_j^+(\tilde{k}_{\min}) = \alpha_j^-(\tilde{k}_{\min}) = \frac{2 s e^{-sL}}{C_{p_2} (1- e^{-2sL} )} \delta^{1/5} =  \frac{2 s e^{-sL}}{C_{k_2}^4 (1- e^{-2sL} )} \delta^{1/5} := \tilde a,\\
\displaystyle \beta_j^+(\tilde{k}_{\min}), \beta_{j+1}^-(\tilde{k}_{\min}) \in \{ \delta^{2/5} C_{k_2}^2 \tilde b, \frac{1}{\delta^{2/5} C_{k_2}^2} \tilde b\},\,  \tilde b =  1 - \frac{s( e^{-2sL} + 1)}{C_{k_2}^4 (1- e^{-2sL} )}\delta^{1/5},
\end{array}
$$
which leads to the low frequency iteration matrix
\begin{equation}
\label{eq:tlf2}
T_{lf,2par}= \left[ \begin{array}{cccccccc}
0 & \tilde b_+ & \tilde a & 0 & \hdots  & 0 & 0 \\
\tilde b_- & 0 & 0 & 0 & \hdots & 0 & 0  \\
0 & 0 & 0 & \tilde b_+ & \hdots & 0 & 0  \\
0 & \tilde a &\tilde b_- &0 & \hdots & \tilde a & 0 \\
0 & 0&  \hdots & \ddots & \ddots & 0 & 0\\
0 & 0&  \hdots & 0 & 0 & 0 & \tilde b_+\\
0 &0 & \hdots & 0 & \tilde a & \tilde b_- & 0 
\end{array} \right],
\end{equation}
where in fact the couple $ \tilde b_+ \ne \tilde b_-$ can vary along
the diagonal but always lays in the set $\{ \delta^{2/5} C_{k_2}^2
\tilde b, \frac{1}{\delta^{2/5} C_{k_2}^2} \tilde b\}$ which does not
change the eigenvalues of the matrix.  By computing the spectral
radius of this matrix for $J=2,3,4,\ldots$ subdomains we get for 
small overlap $\delta$
\begin{equation}
  \rho_J(k_{\min}) \sim 1 - \frac{K_J}{C_{k_2}^4}\delta^{1/5}
\end{equation}
with the same constant $K_J$ from \Cref{KJ}, and equating with 
$\rho(k^*) \sim 1 - 2C_{k_2} \delta ^{1/5}$ from \Cref{eq:rhohf2}, we obtain
\begin{equation}
  C_{k_2}=\frac{K_J}{2^{1/5}}.
\end{equation}
We therefore have, using Lemma \ref{GenericLemma}, the following
result for the $J$ subdomain decomposition:
\begin{theorem}[$J$ Subdomain Robin Optimization]\label{ThRobinJ}
  For $J$ subdomains and one sided Robin transmission conditions,
  $p_j^{+}=p_j^{-}$, the asymptotically optimized parameters for small
  overlap $\delta$ and associated convergence factor are
  \begin{equation}\label{eq:pJdom}
    p_j^{+}=p_j^{-}=p^*\sim \left(\frac{K_J^2}{2}\right)^{1/3}\delta^{-1/3}, \quad
    \rho \sim 1 - 2^{4/3}K_J^{1/3} \delta^{1/3},
  \end{equation}
  with the constant $K_J$ from \Cref{KJ}. For two-sided Robin
  transmission conditions, $p_j^{+}\ne p_j^{-}$, the asymptotically
  optimized parameters for small overlap $\delta$ and associated
  convergence factor are
  \begin{equation}\label{eq:2pJdom}
    p_j^+=p_+^*\sim  \left(\frac{K_J}{2^{1/5}}\right)^2 \delta^{-3/5},\quad
    p_j^-=p_-^*\sim  \left(\frac{K_J}{2^{1/5}}\right)^4 \delta^{-1/5},\quad
    \rho \sim 1- 2^{4/5}K_J  \delta^{1/5},
  \end{equation}
  and the role of $p_+^*$ and $p_-^*$ can be switched without changing the
  result.
\end{theorem}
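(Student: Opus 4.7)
The plan is to assemble the theorem directly from the two ingredients already in place in this subsection. The first ingredient is Lemma \ref{GenericLemma}, which fixes the \emph{form} of the optimized parameters and convergence factor up to a single unknown constant ($C_k$ in the one-sided case, $C_{k_2}$ in the two-sided case). The second ingredient is the low-frequency analysis of the iteration matrices $T_{lf,1par}$ from \eqref{eq:tlf1} and $T_{lf,2par}$ from \eqref{eq:tlf2}, whose spectral radii, when evaluated at $\tilde k_{\min}$, lead through \eqref{eq:const} to the constant $K_J$ of \eqref{KJ}. Equioscillation with the high-frequency maximum of $\rho$ then determines $C_k$ and $C_{k_2}$ in closed form, and the announced formulas follow by substitution.

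Concretely, for the one-sided case I would start from $p^* = (C_k^2/2)\,\delta^{-1/3}$ and $\rho^* \sim 1 - 2C_k\delta^{1/3}$ supplied by Lemma \ref{GenericLemma}. Using $C_k = (2K_J)^{1/3}$ from \eqref{CK}, I compute $C_k^2/2 = 2^{-1/3}K_J^{2/3} = (K_J^2/2)^{1/3}$ and $2C_k = 2^{4/3}K_J^{1/3}$, which yields exactly \eqref{eq:pJdom}. For the two-sided case, Lemma \ref{GenericLemma} gives $p_1^* = C_{k_2}^2\delta^{-3/5}$, $p_2^* = C_{k_2}^4\delta^{-1/5}$, and $\rho^*\sim 1 - 2 C_{k_2}\delta^{1/5}$; substituting $C_{k_2} = K_J/2^{1/5}$ produces the stated $p_+^* = (K_J/2^{1/5})^2\delta^{-3/5}$, $p_-^* = (K_J/2^{1/5})^4\delta^{-1/5}$, and $\rho\sim 1 - 2^{4/5}K_J\delta^{1/5}$. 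The interchangeability of $p_+^*$ and $p_-^*$ follows from the observation, already noted after \eqref{eq:tlf2}, that permuting the couple $(\tilde b_+,\tilde b_-)$ along the diagonal does not affect the spectrum of $T_{lf,2par}$.

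The only point that requires genuine care, and the one I expect to be the main obstacle, is the justification that equioscillation between the high-frequency local maximum and the low-frequency value $\rho_J(\tilde k_{\min})$ actually characterizes the optimum. The argument I would use is the standard min-max one: as $p$ increases, the high-frequency maximum $\rho_{1,hf}(k^*)$ decreases (more damping against $e^{-\lambda\delta}$) while the low-frequency value $\rho_J(\tilde k_{\min})$ increases to leading order in $\delta$, so their unique crossing in the asymptotic regime balances the two plateaus and cannot be improved by a unilateral change of $p$; a small perturbation argument about the equioscillation point, together with the monotonicity of the two branches derived from the explicit $\delta$-expansions above, shows that no other choice of $p_j^\pm$ (constant or not) lowers $\rho(T)$ to leading order, so the symmetric ansatz $p_j^+=p_j^-=p^*$ (resp.\ the two-value alternating pattern) is indeed optimal, completing the proof.
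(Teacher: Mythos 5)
Your proposal is correct and follows essentially the same route as the paper: it combines Lemma \ref{GenericLemma} with the low-frequency spectral radius computation of $T_{lf,1par}$ and $T_{lf,2par}$ yielding $K_J$ in \eqref{KJ}, determines $C_k=(2K_J)^{1/3}$ and $C_{k_2}=K_J/2^{1/5}$ by equioscillation, and substitutes; the algebra checks out. Your closing remark on justifying equioscillation via monotonicity of the two branches is a reasonable (and slightly more explicit) gloss on what the paper takes for granted from the cited two-subdomain literature.
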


\subsection{Optimization when $J$ goes to infinity}

We now use the limiting spectrum approach to study the optimized
parameters when the number of subdomains goes to infinity. To do so,
we must assume that the outer Robin boundary conditions use
the same optimized parameter as at the interfaces, in order to have
the Toeplitz structure needed for the limiting spectrum approach. We
state here without proof the result obtained in
\cite{Dolean:2021:OTC}.
\begin{theorem}[Infinite Number of Subdomains Robin Optimization]\label{ThRobinInf}
  With all Robin parameters equal, $p_j^{-}=p_j^{+}=p$, the
  convergence factor of the OSM satisfies the bound
  \[
    \rho = \mathop {\lim }\limits_{J \to  + \infty } \rho ({T_{2d}^{OS}}) \le \max \Big\{\left| {\alpha -\beta} \right|,\left| {\alpha + \beta} \right|\Big\}<1,
  \]
  where 
  $$
  \alpha  =\frac{(\lambda+p)^2 e^{\lambda
    \delta}-(\lambda-p)^2 e^{-\lambda \delta}}{(\lambda+p)^2e^{\lambda (L + \delta )}
  - (\lambda-p)^2 e^{ - \lambda (L + \delta )}},\,
  \beta =\frac{(\lambda - p)(\lambda + p) (e^{-\lambda L}-e^{\lambda L})}{(\lambda+p)(\lambda+p)e^{\lambda (L + \delta )} - (\lambda-p)(\lambda-p)e^{ - \lambda (L + \delta )}}.
  $$
  The asymptotically optimized parameter and associated
  convergence factor are
  \begin{equation}\label{eq:Jdom}
      p^*= 2^{-1/3} K_{\infty}^{2/3}\delta^{-1/3},\quad
      \rho=  1- 2^{4/3}K_{\infty}^{1/3}\delta^{1/3}+{\cal O}(\delta^{2/3}), 
  \end{equation}
  with the constant 
  \begin{equation}\label{eq:cnr}
  K_{\infty}:=\Re \frac{s(e^{sL}-1)}{e^{sL}+1}.
  \end{equation}
  If we allow two-sided Robin parameters, $p_j^{-}=p^-$ and
  $p_j^+=p^{+}$, the OSM convergence factor satisfies the bound
  \[
    \rho = \mathop {\lim }\limits_{J \to  + \infty } \rho ({T_{2d}^{OS}}) \le \max \Big\{\left| {\alpha -\sqrt{ \beta_{-} \beta_{+}}} \right|,\left| {\alpha + \sqrt{\beta_{-}\beta_{+}}} \right|\Big\}<1,
  \]
  where 
  $$
  \alpha = \frac{(\lambda+p^+)(\lambda+p^-) e^{\lambda
    \delta}-(\lambda-p^+) (\lambda-p^-)e^{-\lambda \delta}}{D},\, \beta^{\pm} =\frac{(\lambda^2 - (p^{\mp})^2)(e^{-\lambda L}-e^{\lambda L})}{D},
    $$
  with 
  $
  D=(\lambda+p^+)(\lambda+p^-)e^{\lambda (L + \delta )} - (\lambda-p^+)(\lambda-p^-)e^{ - \lambda (L + \delta )}.
  $
  The asymptotically optimized parameter choice $p_-^* \ne p_+^*$ 
  and the associated convergence factor are    
  $$
    p_-^*,p_+^* \in \left\{ K_{\infty}^{2/5}\delta^{-3/5},\, K_{\infty}^{4/5}\delta^{-1/5}  \right\},
   \quad \rho = 1- 2K_{\infty}^{1/5}\delta^{1/5} + {\cal O}(\delta^{2/5}),
   $$
   with the same constant $K_{\infty}$ as for the one sided case in
   \eqref{eq:cnr}.
\end{theorem}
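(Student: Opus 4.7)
The structural key is that, under the theorem's hypotheses of uniform interface parameter(s) and matching outer Robin values $p_a=p_b$, the iteration matrix $T$ in \eqref{SubstructuredForm} is block Toeplitz (tridiagonal with constant $2\times 2$ blocks $T^1$ and $T^2$ built from \eqref{eq:alphabeta}). I would therefore invoke the limiting spectrum framework of \cite{Bootland:2021:APS}: as $J\to\infty$, the spectrum of $T$ is enclosed by $\bigcup_{\theta\in[0,2\pi)}\sigma(\hat T(\theta))$, with matrix symbol
$$
\hat T(\theta)=T^1 e^{-i\theta}+T^2 e^{i\theta}
=\begin{pmatrix}\alpha\,e^{-i\theta} & \beta^-\,e^{-i\theta}\\ \beta^+\,e^{i\theta} & \alpha\,e^{i\theta}\end{pmatrix}.
$$
Its characteristic polynomial is $\mu^2-2\alpha\cos\theta\,\mu+(\alpha^2-\beta^+\beta^-)=0$, giving $\mu(\theta)=\alpha\cos\theta\pm\sqrt{\beta^+\beta^--\alpha^2\sin^2\theta}$. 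A direct extremisation shows $|\mu(\theta)|\le\max\{|\alpha\pm\sqrt{\beta^+\beta^-}|\}$ for all $\theta$, with equality at $\theta=0,\pi$; this delivers the announced bound (which specialises to $\max\{|\alpha\pm\beta|\}$ in the one-sided case $\beta^+=\beta^-=\beta$).

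With the bound in hand I would follow the equioscillation strategy of Theorem~\ref{ThRobinJ}. Lemma~\ref{GenericLemma} supplies the high-frequency branch $1-2C_k\delta^{1/3}$ in the one-sided case and $1-2C_{k_2}\delta^{1/5}$ in the two-sided case. For the low-frequency branch I substitute $p=\tfrac{1}{2}C_k^2\delta^{-1/3}$ into $\alpha,\beta$ evaluated at $\tilde k_{\min}$ (so $\lambda=s=\sqrt{\tilde k_{\min}^2+\eta-i\varepsilon}$) and expand in powers of $\delta^{1/3}$. Factoring out the leading $2p^2\sinh(sL)$ from the denominator gives
$$
\alpha=\frac{2s}{C_p\sinh(sL)}\delta^{1/3}+{\cal O}(\delta^{2/3}),\qquad
\beta=1-\frac{2s\coth(sL)}{C_p}\delta^{1/3}+{\cal O}(\delta^{2/3}).
$$
Using the identity $(1-\cosh(sL))/\sinh(sL)=-\tanh(sL/2)$, the dominant quantity $|\alpha+\beta|$ simplifies to $1-\frac{2K_\infty}{C_p}\delta^{1/3}+{\cal O}(\delta^{2/3})$, with exactly the $K_\infty$ of \eqref{eq:cnr}. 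Equating with the high-frequency value $1-2C_k\delta^{1/3}$ forces $C_k^3=2K_\infty$, whence $p^*=2^{-1/3}K_\infty^{2/3}\delta^{-1/3}$ and $\rho=1-2^{4/3}K_\infty^{1/3}\delta^{1/3}+{\cal O}(\delta^{2/3})$.

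The two-sided case proceeds identically with the ansatz $p_\pm\in\{C_{k_2}^2\delta^{-1/5},C_{k_2}^4\delta^{-1/5}\}$: only the product $\beta^+\beta^-$ enters the spectrum of $\hat T$, so swapping $p_+^*$ and $p_-^*$ leaves $\rho$ invariant, and matching the low-frequency expansion against $1-2C_{k_2}\delta^{1/5}$ from \eqref{eq:rhohf2} fixes $C_{k_2}$ in terms of the same $K_\infty$ and yields the stated two-sided formulas.

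The principal obstacle is the very first step. Because $T$ is strongly non-normal, the limiting spectrum theorem a priori gives only a pseudospectral enclosure $\lim_{J\to\infty}\sigma(T)\subset\bigcup_\theta\sigma(\hat T(\theta))$; one must argue separately, via Szeg\H{o}--Tyrtyshnikov estimates or a rank-perturbation analysis of the two outer boundary rows, that these finite-rank perturbations contribute $o(1)$ to $\rho(T)$, so that the limiting spectral radius is indeed majorised by $\max\{|\alpha\pm\sqrt{\beta^+\beta^-}|\}$. Once this is in place, the Taylor expansions and the equioscillation matching are mechanical and produce the closed-form expressions.
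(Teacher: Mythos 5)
First, note that the paper does not actually prove Theorem \ref{ThRobinInf}: it is quoted ``without proof'' from \cite{Dolean:2021:OTC}, so the comparison can only be with the route the paper's framework clearly intends. Your plan is exactly that route: pass to the block Toeplitz symbol $\hat T(\theta)$, bound the limiting spectral radius by $\max\{|\alpha\pm\sqrt{\beta^+\beta^-}|\}$, reuse the high-frequency asymptotics of Lemma \ref{GenericLemma}, and equioscillate against the low-frequency expansion at $\tilde k_{\min}$. Your one-sided computation is correct: $\alpha\sim\frac{2s}{C_p\sinh(sL)}\delta^{1/3}$, $\beta\sim1-\frac{2s\coth(sL)}{C_p}\delta^{1/3}$, hence $|\alpha+\beta|\sim1-\frac{2}{C_p}\Re\big(s\tanh(sL/2)\big)\delta^{1/3}$, which reproduces \eqref{eq:Jdom} with exactly the $K_\infty$ of \eqref{eq:cnr}. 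You also correctly identify that the real mathematical content is the limiting-spectrum statement for this non-normal block Toeplitz family; that is precisely what is delegated to \cite{Bootland:2021:APS} and \cite{Dolean:2021:OTC}.

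Two places are asserted rather than proved, and one of them would not come out as you claim. (i) ``A direct extremisation shows $|\mu(\theta)|\le\max\{|\alpha\pm\sqrt{\beta^+\beta^-}|\}$'' is not a one-line calculus fact for complex $\alpha,\beta^\pm$; it is true, e.g.\ because the two roots satisfy $\mu+(\alpha^2-\beta^+\beta^-)/\mu=2\alpha\cos\theta$, so they lie on confocal ellipses with foci $\pm2\sqrt{\alpha^2-\beta^+\beta^-}$, and since the regions bounded by these nested ellipses are convex, the largest one met by the segment $\{2\alpha c:\,c\in[-1,1]\}$ is met at an endpoint $c=\pm1$ --- some such argument must be supplied, and the strict bound $<1$ in the statement is never addressed. (ii) The two-sided case is waved through (``proceeds identically \dots yields the stated formulas''), but carrying it out does not give the displayed constants: at $\tilde k_{\min}$ only the weaker parameter $p\sim C_{k_2}^4\delta^{-1/5}$ enters at leading order, so $|\alpha+\sqrt{\beta^+\beta^-}|\sim1-\frac{K_\infty}{C_{k_2}^4}\delta^{1/5}$, and equating with $1-2C_{k_2}\delta^{1/5}$ forces $C_{k_2}=(K_\infty/2)^{1/5}$, i.e.\ $\rho\sim1-2^{4/5}K_\infty^{1/5}\delta^{1/5}$ and $p^*\in\{2^{-2/5}K_\infty^{2/5}\delta^{-3/5},\,2^{-4/5}K_\infty^{4/5}\delta^{-1/5}\}$, which differs by powers of $2$ from the theorem's display (the display is also not the $J\to\infty$ limit of \eqref{eq:2pJdom}); you would have to either exhibit a genuine extra factor $2$ in the infinite-$J$ low-frequency branch or recognize the stated two-sided constants as a normalization issue/typo, rather than assert agreement. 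Minor further points: your two-sided ansatz should read $\{C_{k_2}^2\delta^{-3/5},C_{k_2}^4\delta^{-1/5}\}$, and the Toeplitz structure requires the outer Robin parameters to equal the interface parameter, not merely $p_a=p_b$.
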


Even though we had to use Robin outer boundary conditions to obtain
$K_\infty$, the constant $K_J$ we obtained for a finite number $J$ of
subdomains with Dirichlet boundary conditions converges when $J$
becomes large to $K_{\infty}$,
\begin{equation}\label{LimitKJ}
\lim_{J\to\infty}K_J=
  \lim_{J\to\infty}\Re \frac{ s( e^{2sL}+1 - 2\cos\left(\frac{\pi}{J}\right) e^{sL})}{(e^{2sL} -1)}=\Re\frac{s(e^{sL}-1)}{e^{sL}+1}=K_{\infty}.
\end{equation}
We show in Figures \ref{fig:constJ1} and \ref{fig:constJ2}
\begin{figure}
\includegraphics[width = 0.32\linewidth]{./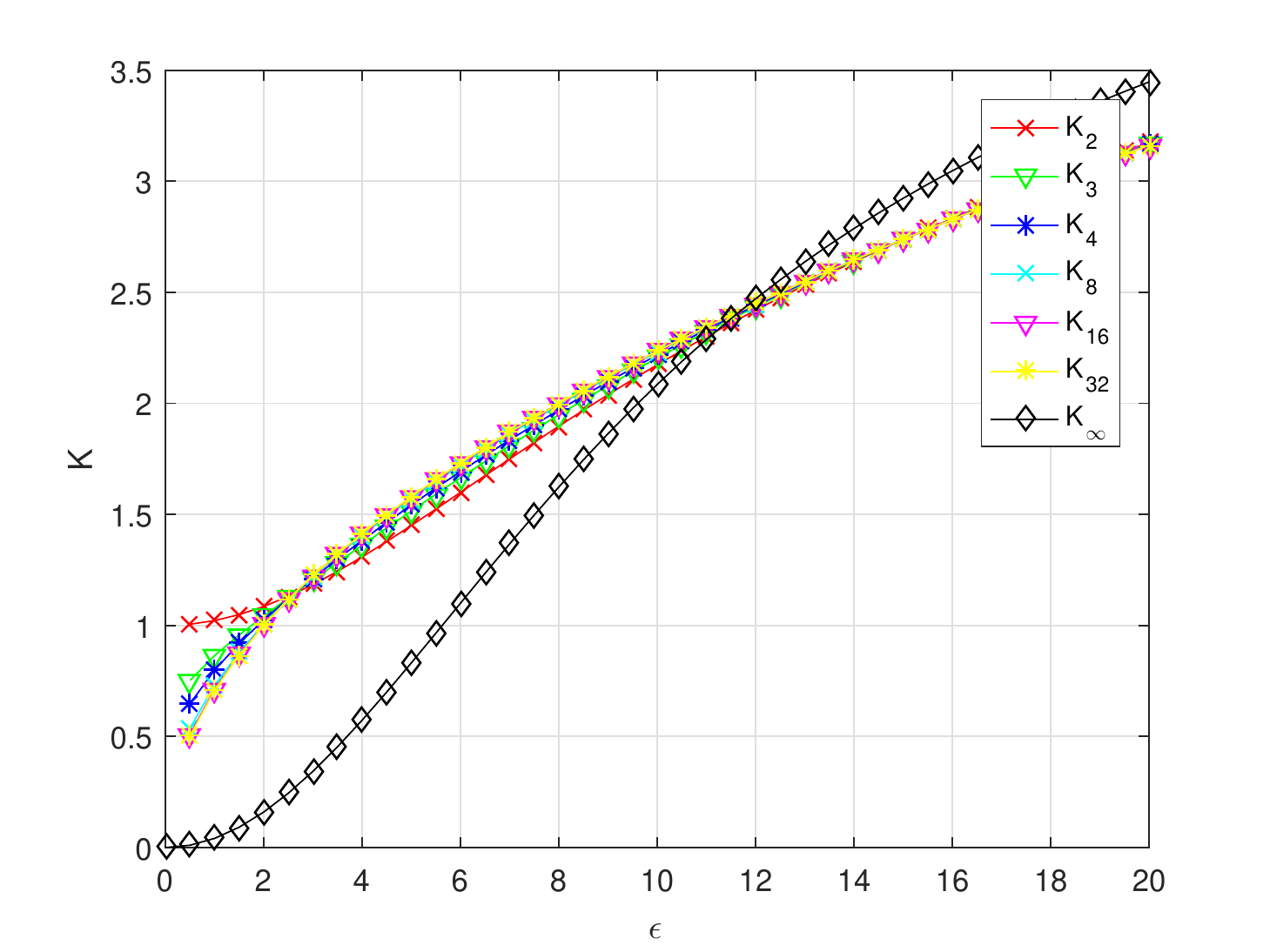}
\includegraphics[width = 0.32\linewidth]{./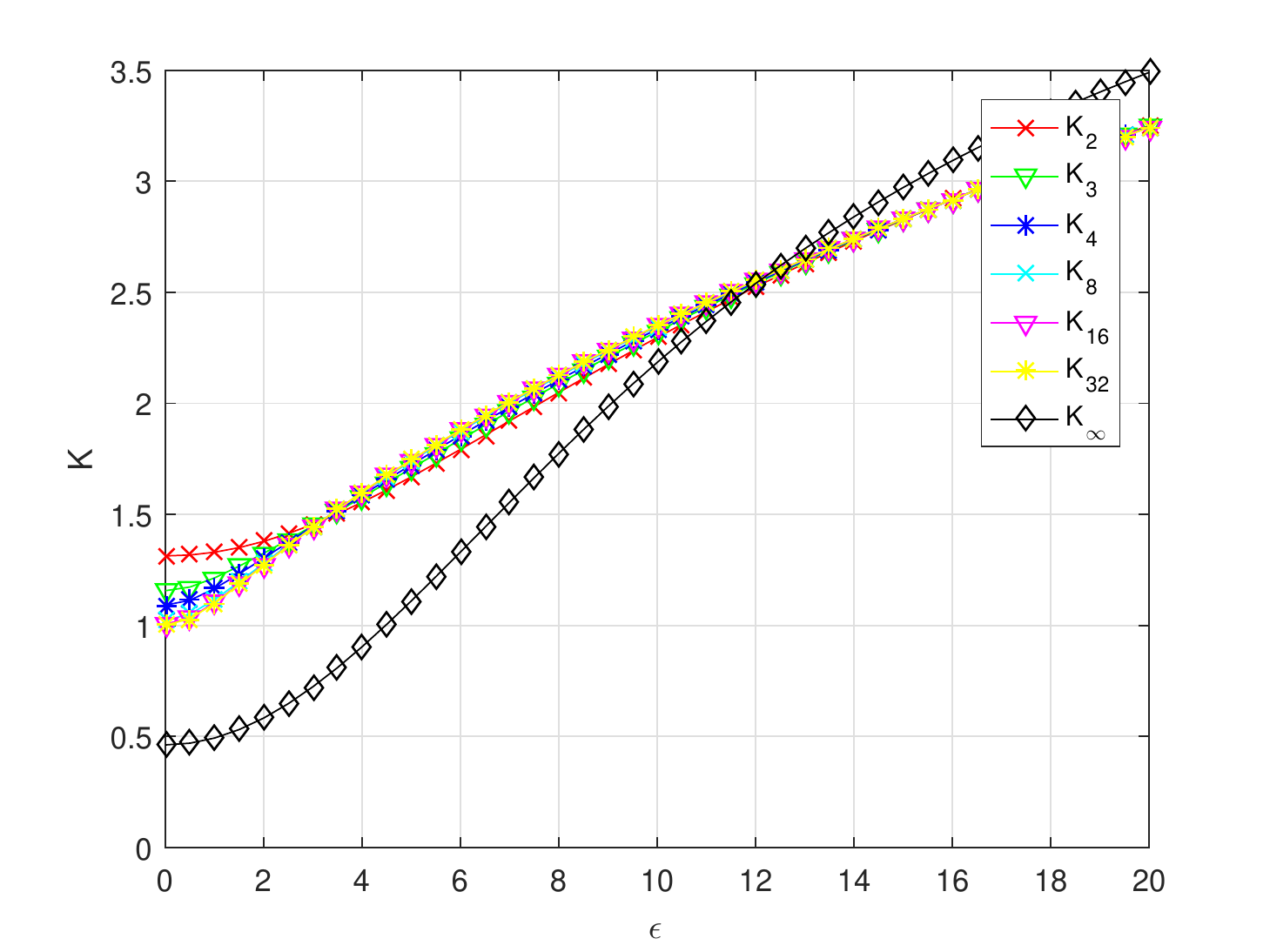}
\includegraphics[width = 0.32\linewidth]{./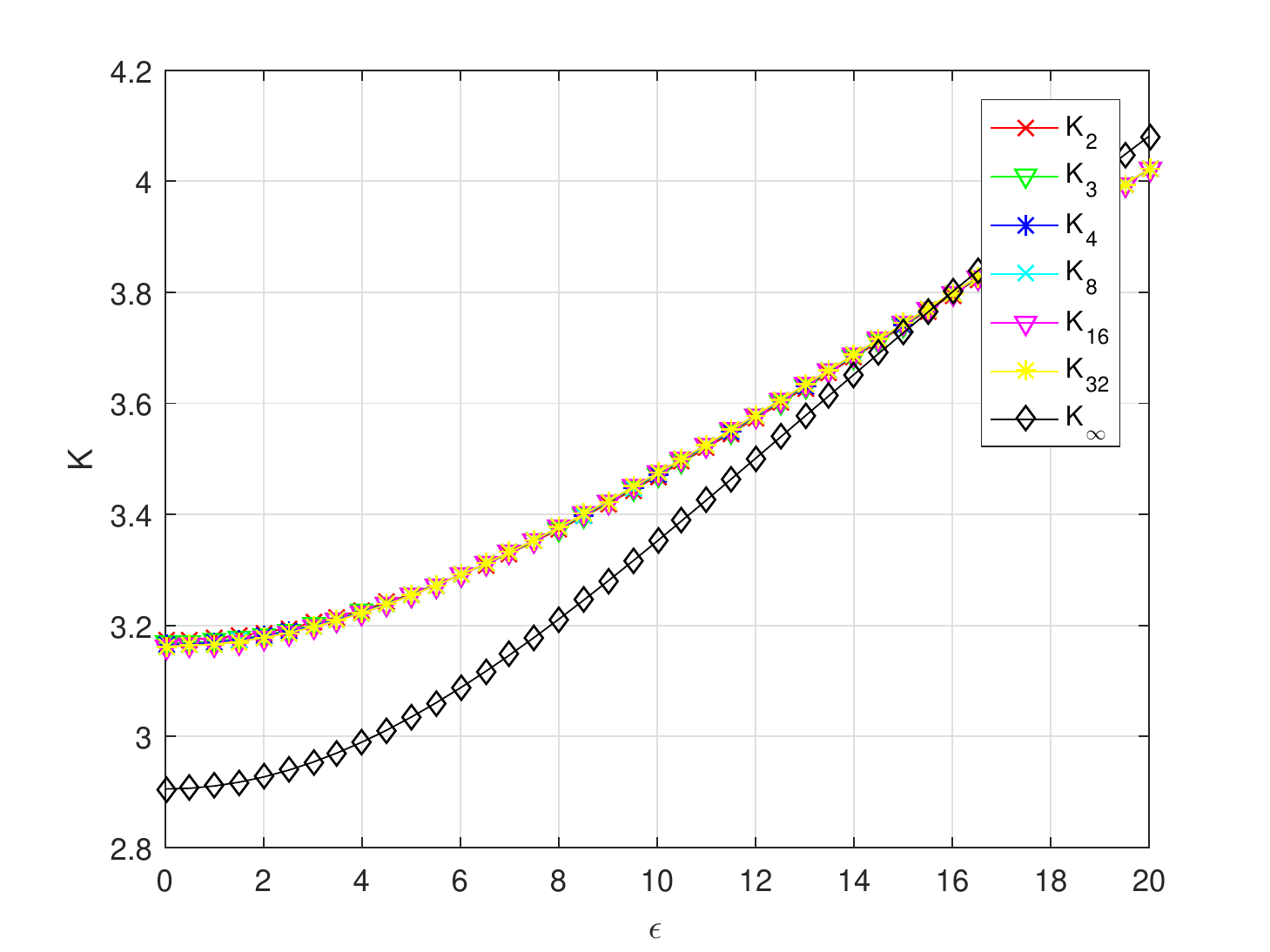}
\caption{Optimized constants for different numbers of subdomains for a
  fixed $\eta$ and $L=1$ as a function of $\varepsilon$.}
\label{fig:constJ1}
\end{figure}
\begin{figure}
\includegraphics[width = 0.32\linewidth]{./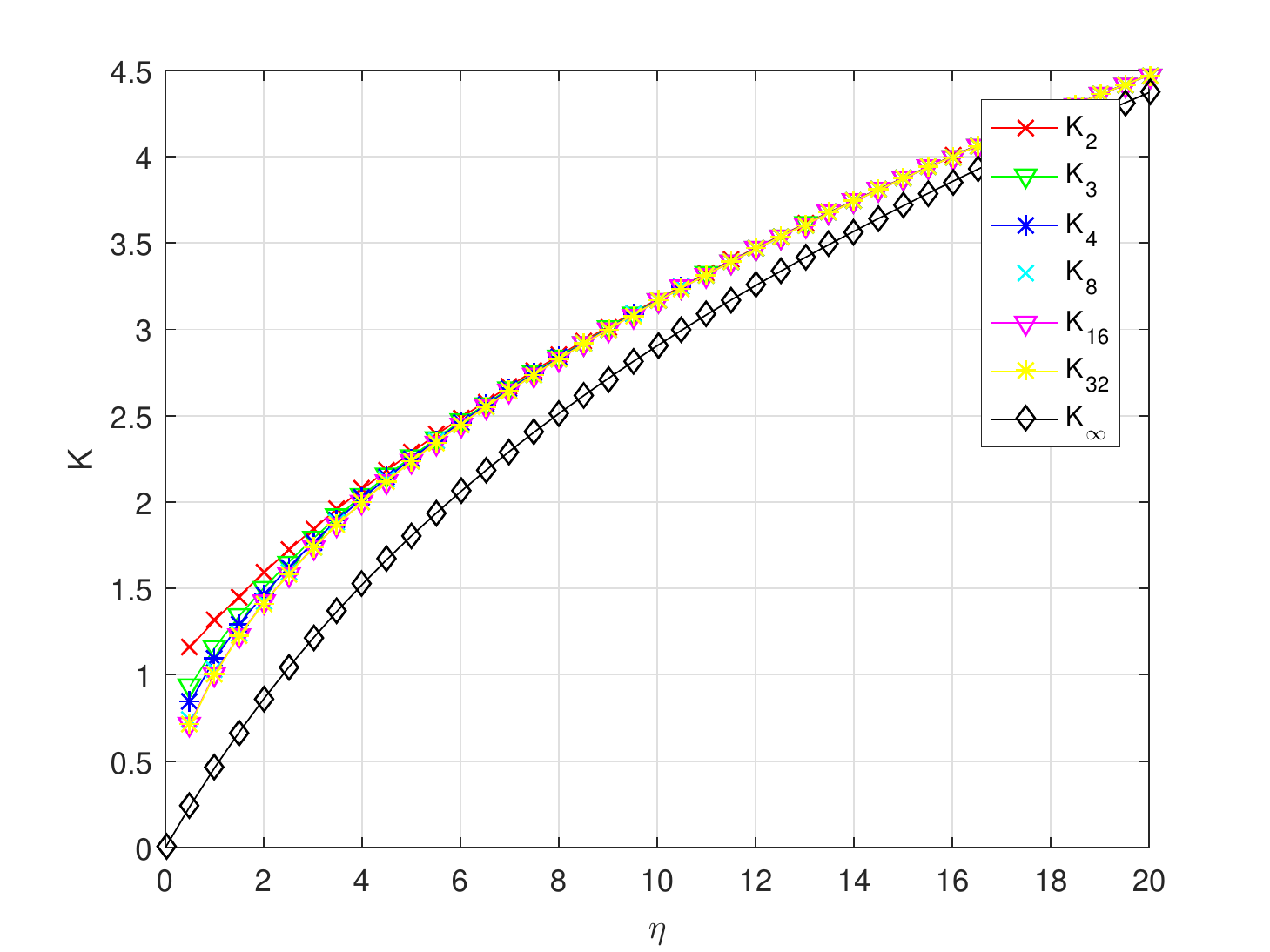}
\includegraphics[width = 0.32\linewidth]{./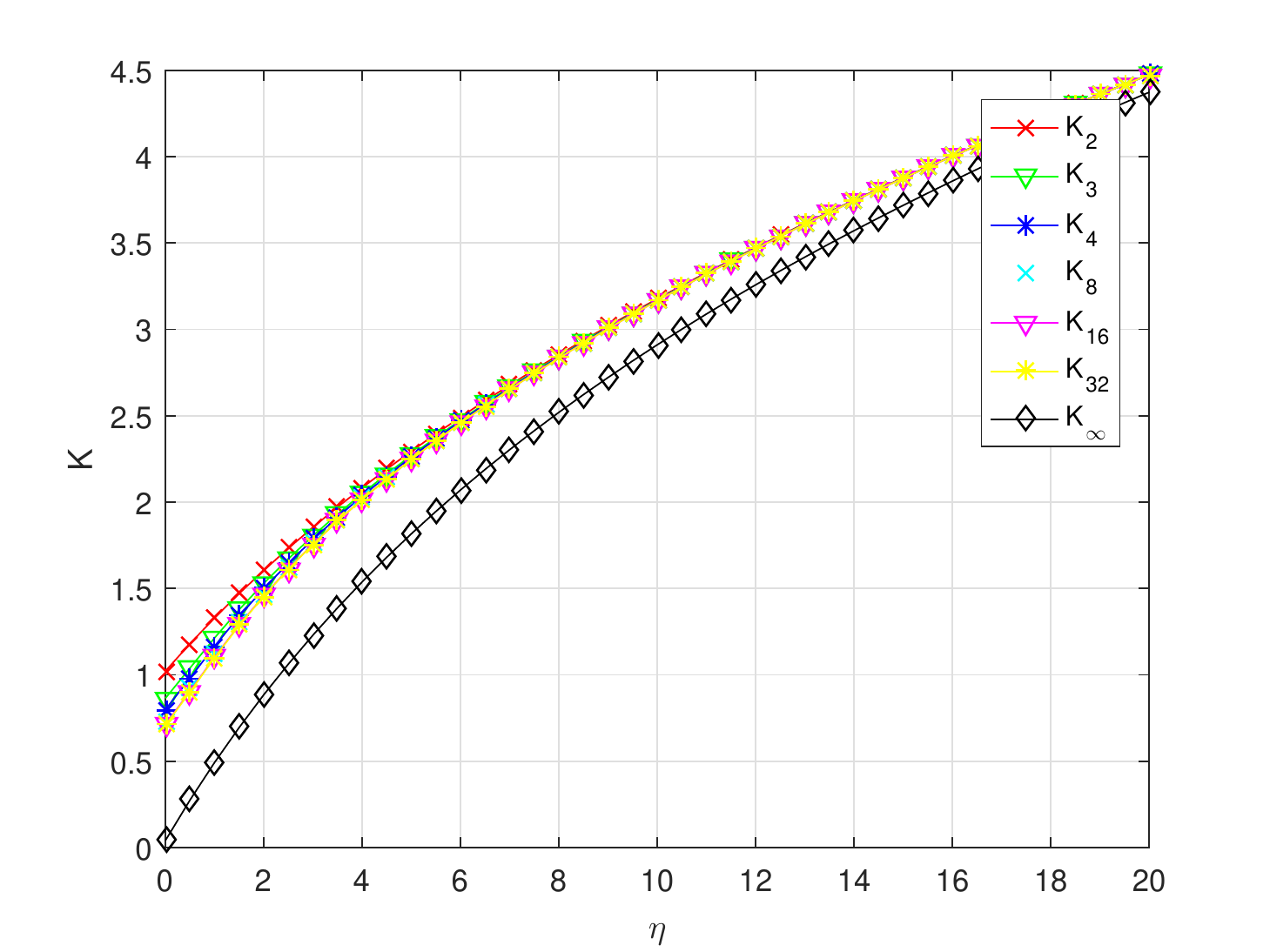}
\includegraphics[width = 0.32\linewidth]{./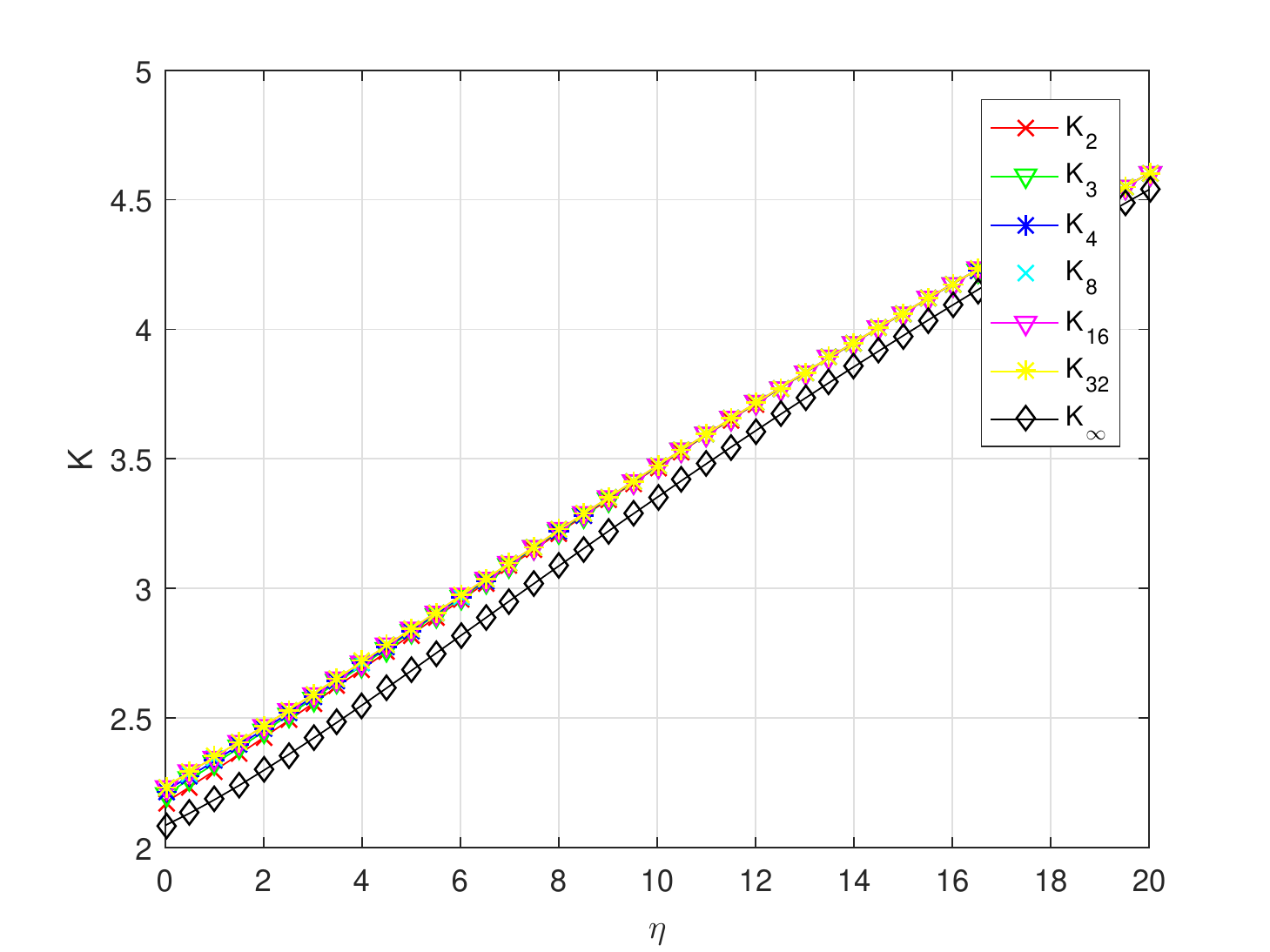}
\caption{Optimized constants for different numbers of subdomains for a
  fixed $\varepsilon$ and $L=1$ as a function of $\eta$.}
\label{fig:constJ2}
\end{figure}
how the constants $K_J$ evolve as functions of the problem parameters $\eta$ and $\varepsilon$ for different numbers of subdomains $J$ and
how they approach the limiting value $K_{\infty}$ as the number of
subdomains increases. We see that both the two subdomain optimization
and the limiting spectrum analysis result for an infinite number of
subdomains give quite good approximations for intermediate numbers of
subdomains over a large range of problem parameter values, the
specific optimization for a given number of subdomains only becomes
important when both $\eta$ and $\varepsilon$ are small.

\subsection{Scalability}\label{ScalabilitySubSec}

From the constant $K_J$ in \eqref{KJ} which governs the
  convergence factor in Theorem \ref{ThRobinJ}, and $K_{\infty}$ in
  \eqref{eq:cnr} which governs the convergence factor when the number
  of subdomains $J$ goes to infinity in Theorem \ref{ThRobinInf}, we
  see that for fixed subdomain width $L$, $K_J$ is robust when the
  number of subdomains $J$ increases, and thus our one-level methods
  are weakly scalable in this setting of strip decompositions, a
  result first proved for Laplace problems using different techniques
  in \cite{ciaramella2017analysis, ciaramella2018analysis,
    ciaramella2018analysisIII}. Furthermore, we have seen in
  \eqref{LimitKJ} that $K_J$ converges to $K_{\infty}$ when the number
  of subdomains $J$ becomes large, which shows that the methods are
  weakly scalable independently of the outer boundary conditions on
  the left and right of the strip decomposition. We will illustrate
  this scalability for fixed $L$ in Subsections
  \ref{NumSubSecScalability} and \ref{NumSubSecGMRES}, and also show
  that scalability is lost when the subdomain size $L$ becomes small
  when their number $J$ increases, as predicted by the formulas for
  $K_J$ in \eqref{KJ} and $K_{\infty}$ in \eqref{eq:cnr}.

\section{Optimized Ventcell transmission conditions}\label{VentcellSec}

To obtain Ventcell (second order) transmission conditions with even
better performance, we replace the coefficient $p_j^{\pm}$ from
\cref{eq:2} by second order differential operators along the
interface.  Writing again the local solutions as a Fourier series, the
Fourier coefficients satisfy the equations with Ventcell transmission
conditions,
\begin{equation}\label{eq:2nd}
  \begin{array}{rcll}
  \partial_{xx}v_j^{n}-(\tilde{k}^{2}+\eta-i\varepsilon)
  v_j^{n}&=&0 & x\in  (a_j,b_j),  \\
  -\partial_x v_j^{n}+(p_j^{-} + \tilde{k}^{2} q_j^{-}) v_j^{n}&=&
  -\partial_x v_{j-1}^{n-1}+(p_j^{-} + \tilde{k}^{2} q_j^{-}) v_{j-1}^{n-1}\quad
  & \mbox{at $x=a_j$},     \\
  \partial_x v_j^{n}+(p_j^{+} + \tilde{k}^{2} q_j^{+}) v_j^{n}&=&
  \partial_x v_{j+1}^{n-1}+(p_j^{+} + \tilde{k}^{2} q_j^{+}) v_{j+1}^{n-1}\quad &
  \mbox{at $x=b_j$}.
\end{array} 
\end{equation}
As in Section \ref{RobinSec}, the interface iteration involves a
block-Toeplitz iteration matrix which can be obtained by replacing
$p_j^{\pm} $ by $p_j^{\pm} + \tilde{k}^{2} q_j^{\pm}$ in the matrix
from \cref{SubstructuredForm}. The new iteration matrix depends on two
sets of parameters, $T_2(\tilde k, p_j^{\pm},q_j^{\pm}):= T(\tilde k,
p_j^{\pm}+\tilde k^2 q_j^{\pm})$, and we need to solve now the min-max
problem
$$
\min_{p_j^{\pm},q_j^{\pm}} \max_{\tilde k \in [\tilde k_{min},\tilde k_{max}]} |\rho(T_2(\tilde k, p_j^{\pm},q_j^{\pm}))|.
$$
Like in the case of Robin transmission conditions from Theorem
\ref{Theorem2sub}, we start by showing an optimization result in the
case of two subdomains, and then generalize this to the case of many
subdomains. We will see that the optimized parameters in the Ventcell
case depend on the same constants as for the Robin case.
\begin{theorem}[Two Subdomain Ventcell Optimization]
  Let $s:=\sqrt{\tilde{k}^2_{\min}+\eta - i\varepsilon}$, where
  the complex square root is taken with the positive real part, and
  let $K$ be the real constant given in \eqref{eq:K}.  Then for two
  subdomains with one sided Ventcell transmission conditions,
  $p_1^{+}=p_2^{-}=:p$, $q_1^{+}=q_2^{-}=:q$, the asymptotically
  optimized parameters $p$ and $q$ for small overlap $\delta$ and
  associated convergence factor are
    \label{Theorem2sub2par}
  \begin{equation}\label{eq:p2dom2par}
     p\sim 2^{-3/5}K^{4/5} \delta^{-1/5}, \, q \sim 2^{-1/5}K^{-2/5} \delta^{3/5}, \quad
    \rho \sim 1 -  2^{8/5}K^{1/5} \delta^{1/5} + {\cal O}(\delta^{2/5}).
  \end{equation} 
  For two-sided Ventcell transmission conditions, $p_1^{+} \ne
  p_2^{-}$, $q_1^{+}\ne q_2^{-}$, the asymptotically optimized
  parameters for small overlap $\delta$ are
  \begin{equation}\label{eq:2p2dom2par}
    p_1^+ \sim  2^{-8/9} K^{8/9} \delta^{-1/9},\, q_1^+ \sim 2^{2/9} K^{-2/9} \delta^{7/9},\, p_2^-\sim  2^{-2/3} K^{2/3} \delta^{-1/3},\, q_2^{-} \sim 2^{4/9} K^{-4/9} \delta^{5/9}
  \end{equation} 
  and the associated convergence factor is $\rho \sim 1-
  2^{8/9}K^{1/9} \delta^{1/9} + {\cal O}(\delta^{2/9}).$
\end{theorem}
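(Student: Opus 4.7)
The plan is to extend the two-subdomain Robin optimization of Theorem~\ref{Theorem2sub} to the Ventcell setting by exploiting the formal substitution $p_j^{\pm}\mapsto p_j^{\pm}+\tilde k^2 q_j^{\pm}$ noted at the beginning of this section: the Ventcell iteration has exactly the same structure as the Robin iteration, only with a frequency dependent symbol. For $J=2$ this yields closed-form convergence factors $\rho(\tilde k;p,q)$ and $\rho(\tilde k;p_1^+,q_1^+,p_2^-,q_2^-)$ in the one-sided and two-sided cases, and the optimization reduces to an asymptotic min--max problem of the type analysed in \cite{gander2006optimized, bennequin2016optimized}.

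First, I would evaluate $\rho$ at $\tilde k=\tilde k_{\min}$ and expand in the regime where $p$ is large while $q\tilde k_{\min}^2$ is a subdominant correction to $p$. Since $q$ enters only additively with $p$ at the fixed frequency $\tilde k_{\min}$, the leading low-frequency correction reduces to exactly the Robin calculation and reproduces the same constant $K$ from \eqref{eq:K} in the form $\rho(\tilde k_{\min})\sim 1-C(p,q)\,K$ for an explicit prefactor $C(p,q)$. Second, I would study the high-frequency approximation
$$
\rho_{hf}(\tilde k;p,q)=\left|\frac{\lambda-(p+\tilde k^2 q)}{\lambda+(p+\tilde k^2 q)}\right|e^{-\lambda\delta},
$$
together with its two-sided counterpart $\rho_{hf,2}=\bigl(\rho_{hf}(\cdot;p_1^+,q_1^+)\,\rho_{hf}(\cdot;p_2^-,q_2^-)\bigr)^{1/2}$. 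The Ventcell term $\tilde k^2 q$ creates one additional zero of each numerator in $\tilde k$, and hence additional interior local maxima of $\rho_{hf}$ and $\rho_{hf,2}$ compared to the Robin case.

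The third step is the asymptotic equioscillation argument: at the optimum, the low-frequency value $\rho(\tilde k_{\min})$ and the interior local maxima must all coincide, giving a system whose size matches the number of free parameters plus one. For one-sided Ventcell, solving at leading order in $\delta$ forces the scalings $p=O(\delta^{-1/5})$, $q=O(\delta^{3/5})$; matching the leading coefficients in the common level then yields $p\sim 2^{-3/5}K^{4/5}\delta^{-1/5}$, $q\sim 2^{-1/5}K^{-2/5}\delta^{3/5}$, and the rate $1-2^{8/5}K^{1/5}\delta^{1/5}$ of \eqref{eq:p2dom2par}. For two-sided Ventcell the same procedure with four parameters balances the interior maxima of $\rho_{hf,2}$ against $\rho(\tilde k_{\min})$ and yields the asymmetric scalings $\delta^{-1/9},\delta^{7/9},\delta^{-1/3},\delta^{5/9}$ of \eqref{eq:2p2dom2par} together with the convergence factor $1-2^{8/9}K^{1/9}\delta^{1/9}$.

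The main obstacle is the two-sided step: identifying the correct equioscillation ansatz — in particular the striking fact that $p_2^-$ ends up on the Robin scaling $\delta^{-1/3}$ while $p_1^+$ is much smaller at $\delta^{-1/9}$, signalling an asymmetric balance between the two sides that must be discovered rather than postulated — and then extracting the leading coefficients in the complex setting where $\lambda(\tilde k)=\sqrt{\tilde k^2+\eta-i\varepsilon}$ forces one to track real parts throughout, exactly as already seen in the definition of $K$ in \eqref{eq:K}. Once the ansatz is in place, the algebra is mechanical, and the fact that the same constant $K$ governs the rate in all Ventcell variants is a direct consequence of the fixed low-frequency endpoint in the min--max problem.
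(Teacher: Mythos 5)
Your proposal is correct and follows essentially the same route as the paper: an asymptotic equioscillation argument balancing the low-frequency value $\rho(\tilde k_{\min})$ (which, since $\tilde k_{\min}^2 q$ is subdominant to $p$, reproduces the Robin constant $K$ from \eqref{eq:K}) against the additional interior high-frequency maxima created by the $\tilde k^2 q$ term, with power-law ans\"atze for the parameters and maxima locations whose exponents and coefficients are fixed by the resulting system. The paper likewise postulates and verifies the scalings $\delta^{-1/5},\delta^{3/5}$ (one-sided) and $\delta^{-1/9},\delta^{7/9},\delta^{-1/3},\delta^{5/9}$ (two-sided) with two, respectively four, equioscillation conditions, so your sketch matches its proof in both structure and detail.
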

\begin{proof}
The proof follows the lines of the proof of Theorem \ref{Theorem2sub}
from \cite{Dolean:2021:OTC} and uses the fact that the solution of the
min-max problem equioscillates, see Figure \ref{fig:2sdopt2nd}.
\begin{figure}[t]
  \centering
  \includegraphics[width=0.42\textwidth]{./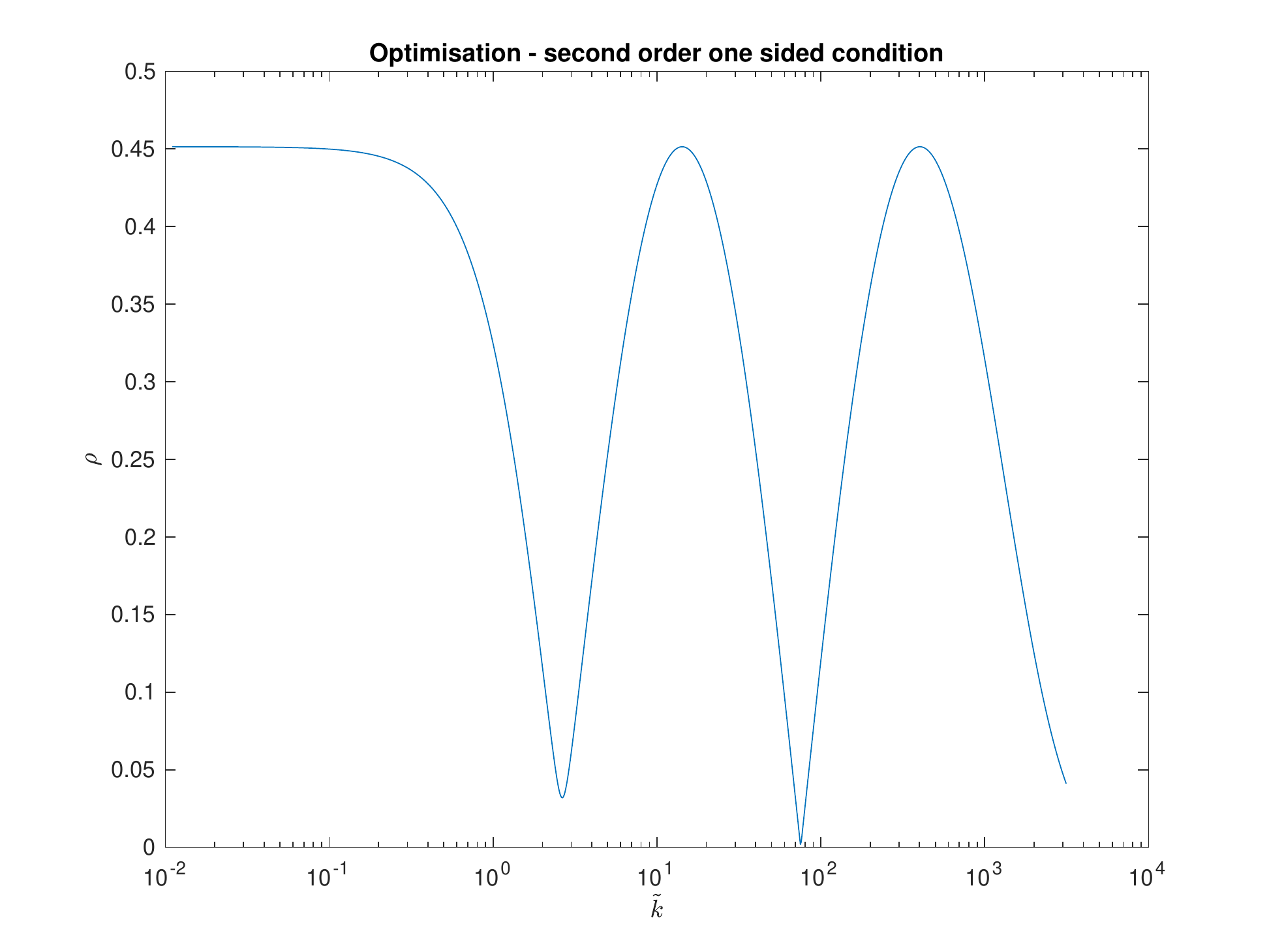}
  \includegraphics[width=0.42\textwidth]{./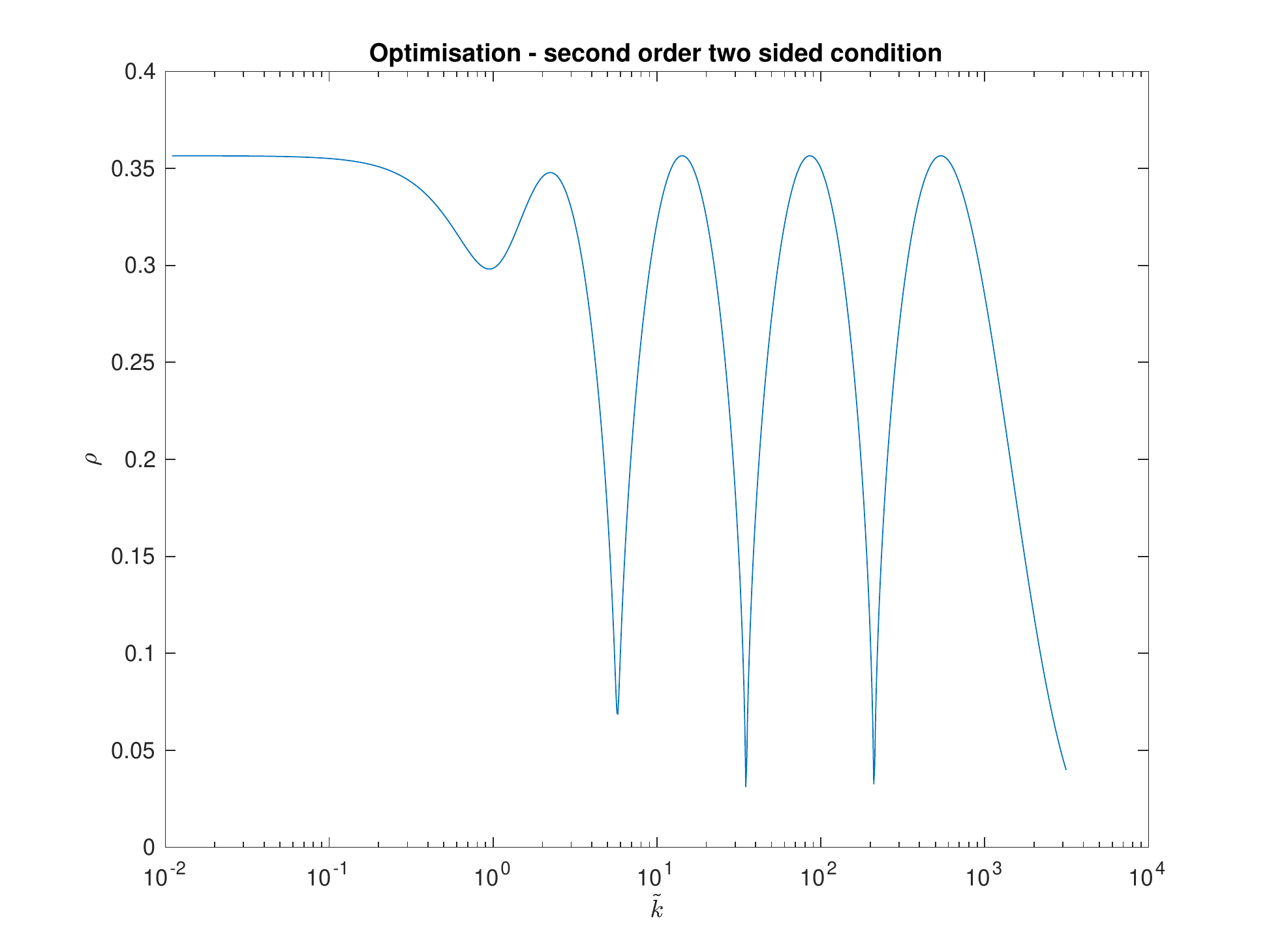}
  \caption{Equioscillation with optimized one sided and two-sided Ventcell
    transmission conditions.}
  \label{fig:2sdopt2nd}
\end{figure}
The main difference compared to Robin transmission conditions is that
we have now more equioscillations.

In the case of one sided conditions we have two equioscillations $\rho(\tilde k_{\min})=\rho(\tilde{k}_1^*) = \rho(\tilde{k}_2^*)$, where $\tilde{k}_j^*$ are two interior local maxima, and we have asymptotically
$$
p \sim C_{p} \delta^{-1/5}, q \sim C_{q} \delta^{3/5}, \rho \sim 1
- C_R \delta^{1/5} + {\cal O}(\delta^{2/5}), \tilde{k}_1^* \sim C_{k_1}
\delta^{-2/5},\, \tilde{k}_2^* \sim C_{k_2} \delta^{-4/5}.
$$
By expanding for $\delta$ small, and setting the leading terms in the
derivatives $\frac{\partial \rho}{\partial k} (\tilde{k}_{1,2}^*)$ to
zero, we get $C_{p}=\frac{2C_{k_1}^2}{C_{k_2}^2}$, $C_{q} =
\frac{2}{C_{k_2}^2}$. Expanding the maxima leads to
$$
\rho(\tilde{k}_1^*) \sim  1 - 8
\frac{C_{k_1}}{C_{k_2}^2} \delta^{1/5} +{\cal O}(\delta^{2/5}),\, 
\rho(\tilde{k}_2^*) \sim  1 - 2 C_{k_2}
\delta^{1/5} +{\cal O}(\delta^{2/5}),
$$
and equating $\rho(\tilde{k}_1^*) = \rho(\tilde{k}_2^*)$ we get $C_{k_1} =
\frac{C_{k_2}^3}{4}$ and $C_R = 2C_{k_2}$.  Finally equating $\rho(\tilde k_{\min}) =
  \rho(\tilde{k}_2^*)$ asymptotically determines uniquely
$C_{k_2} = 2^{3/5} K^{1/5}$ (with $K$ defined in \eqref{eq:K})  and then $C_{k_1} = 2^{-1/5} K^{3/5}$ and $C_{p}=
2^{-3/5} K^{4/5}$, $C_{q}= 2^{-1/5} K^{-2/5}$.

In the case of two-sided conditions, we have four equioscillations,
  $\rho(k_{\min})=\rho(\tilde{k}_1^*) = \rho(\tilde{k}_2^*) = \rho(\tilde{k}_3^*) = \rho(\tilde{k}_4^*) $, where
  $\tilde{k}_j^*$ are four interior local maxima, and we have asymptotically
$$
\begin{array}{c}
p_1 \sim C_{p_1} \delta^{-1/9}, q_1\sim C_{q_1} \delta^{7/9},  p_2\sim C_{p_2} \delta^{-3/9}, q_2 \sim C_{q_2} \delta^{5/9}, \rho \sim 1
- C_R \delta^{1/9} + {\cal O}(\delta^{2/9}),\\
\tilde{k}_1^* \sim C_{k_1} \delta^{-2/9}, \tilde{k}_2^* \sim C_{k_2} \delta^{-4/9}, \tilde{k}_3^*\sim C_{k_3}
\delta^{-6/9}, \tilde{k}_4^* \sim C_{k_4} \delta^{-8/9}.
\end{array}
$$  
By expanding for $\delta$ small, and setting the leading terms in the
  derivatives $\frac{\partial \rho}{\partial k} (\tilde{k}_{1,2,3,4}^*)$ to zero, we get 
\begin{equation}
\label{eq:Cp}
C_{p_1}=\frac{C_{k_1}^2\cdot C_{k_3}^2}{C_{k_2}^2\cdot C_{k_4}^2},\quad C_{p_2}= \frac{C_{k_2}^2\cdot C_{k_4}^2}{ C_{k_3}^2},\quad C_{q_1}=\frac{1}{C_{k_4}^2},\quad C_{q_2} = \frac{C_{k_4}^2}{C_{k_3}^2}.
\end{equation}
Expanding the maxima leads to
$$
\begin{array}{l}
\rho(\tilde{k}_1^*)\sim 1 - 2 \frac{C_{k_1}\cdot C_{k_3}^2}{C_{k_2}^2\cdot C_{k_4}^2} \delta^{1/9} +{\cal O}(\delta^{2/9}),\,
\rho(\tilde{k}_2^*)\sim 1 - 2 \frac{C_{k_2}\cdot C_{k_4}^2}{ C_{k_3}^2\cdot C_{k_4}^2} \delta^{1/9} +{\cal O}(\delta^{2/9}),\\
 \rho(\tilde{k}_3^*)\sim 1 - 2 \frac{C_{k_3}}{C_{k_4}^2} \delta^{1/9} +{\cal O}(\delta^{2/9}),\,
 \rho(\tilde{k}_4^*)\sim 1 - 2 C_{k_4}\delta^{1/9} +{\cal O}(\delta^{2/9}).
\end{array}
$$
Equating now $\rho(\tilde{k}_1^*) = \rho(\tilde{k}_2^*) = \rho(\tilde{k}_3^*) = \rho(\tilde{k}_4^*) $ 
we get $C_{k_1} = C_{k_4}^7, C_{k_2} = C_{k_4}^5,C_{k_3} = C_{k_4}^3$, $C_R = 2C_{k_4}$ and from \eqref{eq:Cp} we get
$$
C_{p_1}= C_{k_4}^8,\, C_{p_2}=\frac{1}{C_{k_4}^2},\, C_{p_3}=C_{k_4}^6,\, C_{p_4}=\frac{1}{C_{k_4}^4}. 
$$
Finally equating $\rho(\tilde k_{\min}) =
  \rho(\tilde{k}_4^*)$ asymptotically determines uniquely
$C_{k_4} = 2^{-1/9} K^{1/9}$ with $K$ given in \eqref{eq:K} and the other constants are determined accordingly.
\end{proof}

\subsection{Optimization for $J$ subdomains}

Like in the case of the Robin conditions, the high frequency behavior
of the convergence factor allows us to study systematically the
asymptotic form of the best parameter choice for $J$ subdomains,
depending on one constant only.
\begin{lemma}[Generic optimized Ventcell asymptotics]\label{GenericLemma2}
  The best choice for one sided Ventcell transmission conditions is
  $p_j^+=p_{j}^-=p^*$, $q_j^+=q_{j}^-=q^*$, and when the overlap
  $\delta$ goes to zero, we have
  \begin{equation}\label{HighFreqMaxVentcell}
    p\sim \frac{C_{k_2}^4}{8} \delta^{-1/5}, \quad q \sim \frac{2}{C_{k_2}^2}\delta^{3/5}, \qquad
    \rho \sim 1 -  2C_{k_2}\delta^{1/5} + {\cal O}(\delta^{2/5}),
  \end{equation}
  where the constant $C_{k_2}$ depends on the number of
  subdomains. For the two-sided Ventcell transmission conditions, the
  best choice is $(p_j^+,q_j^+), (p_{j+1}^-,q_{j+1}^-) \in \{
  (p_1^*,q_1^*),(p_2^*,q_2^*)\}$, $(p_j^+,q_j^+)\ne
  (p_{j+1}^-,q_{j+1}^-) \, \forall j=1..,J-1$, with
  \begin{equation}\label{HighFreqMaxVentcell2}
   (p_1^*,q_1^*) \sim \left(C_{k_4}^8  \delta^{-1/9},\frac{1}{C_{k_4}^2}\delta^{7/9} \right),\,(p_2^*,q_2^*)  \sim \left(C_{k_4}^6  \delta^{-3/9},\frac{1}{C_{k_4}^4}\delta^{5/9} \right),\,
  \rho^* \sim 1 - 2C_{k_4} \delta ^{1/9},
  \end{equation}
  where the constant $C_{k_4}$ depends again on the number of subdomains.
\end{lemma}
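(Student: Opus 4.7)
The plan is to exploit the fact that, after replacing $p_j^\pm$ by $p_j^\pm + \tilde{k}^2 q_j^\pm$ in the high frequency approximations $\rho_{1,hf}$ of \eqref{rho1hf} and $\rho_{2,hf}$ of \eqref{rho2hf}, the resulting $J$ subdomain Ventcell high-frequency convergence factors have exactly the same functional dependence on $\tilde{k}$ as the two subdomain Ventcell convergence factor analyzed in Theorem \ref{Theorem2sub2par}. We can therefore reuse the equioscillation optimization from that proof without modification, obtaining the scalings of $p_j^\pm$ and $q_j^\pm$ up to a single free constant, which will be fixed later by matching with the low-frequency maximum $\rho(\tilde{k}_{\min})$ (in direct analogy with the derivation of $C_k = (2K_J)^{1/3}$ in the Robin case).

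For the one-sided case, I would start from
$$
\rho_{1,hf}^V(\tilde{k}) = \left|\frac{\lambda(\tilde{k}) - p - \tilde{k}^2 q}{\lambda(\tilde{k}) + p + \tilde{k}^2 q}\right| e^{-\lambda(\tilde{k})\delta},
$$
which is precisely the object optimized in the one-sided part of the proof of Theorem \ref{Theorem2sub2par}. Adopting the ansatz $p \sim C_p \delta^{-1/5}$, $q \sim C_q \delta^{3/5}$ with two interior local maxima $\tilde{k}_1^* \sim C_{k_1}\delta^{-2/5}$ and $\tilde{k}_2^* \sim C_{k_2}\delta^{-4/5}$, the leading order stationarity conditions $\partial_{\tilde{k}}\rho_{1,hf}^V(\tilde{k}_j^*)=0$ give $C_p = 2 C_{k_1}^2/C_{k_2}^2$ and $C_q = 2/C_{k_2}^2$, and the equioscillation $\rho(\tilde{k}_1^*) = \rho(\tilde{k}_2^*)$ then forces $C_{k_1} = C_{k_2}^3/4$. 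Substituting back produces $p \sim (C_{k_2}^4/8)\delta^{-1/5}$, $q \sim (2/C_{k_2}^2)\delta^{3/5}$ and maximum value $1 - 2C_{k_2}\delta^{1/5} + {\cal O}(\delta^{2/5})$, which is exactly \eqref{HighFreqMaxVentcell} with $C_{k_2}$ still to be determined.

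For the two-sided case, I would proceed analogously from
$$
(\rho_{2,hf}^V)^2(\tilde{k}) = \left|\frac{\lambda - p_1 - \tilde{k}^2 q_1}{\lambda + p_1 + \tilde{k}^2 q_1}\cdot\frac{\lambda - p_2 - \tilde{k}^2 q_2}{\lambda + p_2 + \tilde{k}^2 q_2}\right| e^{-2\lambda\delta},
$$
with four interior local maxima $\tilde{k}_j^* \sim C_{k_j}\delta^{-2j/9}$ and ansatz $(p_i,q_i) \sim (C_{p_i}\delta^{-(2i-1)/9},\, C_{q_i}\delta^{(9-2i)/9})$. The resulting stationarity and equioscillation system coincides with the one solved in the two-sided part of the proof of Theorem \ref{Theorem2sub2par}, and reduces by the same algebraic manipulations to a single free constant $C_{k_4}$, with $C_{p_1} = C_{k_4}^8$, $C_{p_2} = C_{k_4}^6$, $C_{q_1} = C_{k_4}^{-2}$, $C_{q_2} = C_{k_4}^{-4}$ and high-frequency maximum $1 - 2 C_{k_4} \delta^{1/9} + {\cal O}(\delta^{2/9})$, which is \eqref{HighFreqMaxVentcell2}.

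The main obstacle is the asymptotic bookkeeping in the two-sided case, where four equioscillation constraints must be balanced against four stationarity conditions of different orders in $\delta$; one has to verify that the chosen exponents in the ansatz make all leading-order equations consistent and nontrivial simultaneously. The essential observation that makes this feasible is that this bookkeeping is completely independent of $J$: since the high-frequency form of $\rho$ for many subdomains is, by the High Frequency Lemma, identical to that for two subdomains, every step of the two-subdomain optimization transfers verbatim, and the number of subdomains only enters through the subsequent low-frequency matching that fixes $C_{k_2}$ or $C_{k_4}$.
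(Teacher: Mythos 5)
Your proposal is correct and follows essentially the same route as the paper: the paper proves this lemma by noting that the high-frequency form of the convergence factor for many subdomains coincides with the two-subdomain one, so the equioscillation analysis of Theorem \ref{Theorem2sub2par} carries over verbatim, leaving only the constants $C_{k_2}$, $C_{k_4}$ to be fixed by the subsequent low-frequency ($J$-dependent) matching. You simply spell out the stationarity/equioscillation bookkeeping that the paper delegates to the proof of Theorem \ref{Theorem2sub2par}, and your resulting relations agree with \eqref{HighFreqMaxVentcell} and \eqref{HighFreqMaxVentcell2}.
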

\begin{proof}
Similar to the Robin case in Lemma \ref{GenericLemma}, the proof is a
direct consequence of Theorem \ref{Theorem2sub2par} where the high
frequency arguments are identical in the two or more subdomain case.
\end{proof}

It remains to study the constants $C_{k_2}$ and $C_{k_4}$, which are
determined by equioscillation with the low frequency convergence
factor and which depend on the number of subdomains. To simplify the
computations, we assume again Dirichlet boundary conditions at the
outer boundaries of the global domain and we start by computing the
leading order terms in $T$ for small overlap $\delta$.  For one sided
conditions $p^* = \frac{C_{k_2}^4}{8} \delta^{-1/5}$, $q^* =
\frac{2}{C_{k_2}^2}\delta^{3/5} $ we get, with
  $s:=\sqrt{\tilde{k}_{\min}^2+\eta - i\varepsilon}$,
$$
\begin{array}{c}
\displaystyle \alpha_j^+(\tilde k_{\min}) = \alpha_j^-(\tilde k_{\min}) = \frac{4 s e^{-sL}}{C_p (1- e^{-2sL} )} \delta^{1/5} =  \frac{32 s e^{-sL}}{C_k^5 (1- e^{-2sL} )} \delta^{1/5} := \tilde a,\\
\displaystyle \beta_j^+(\tilde k_{\min}) = \beta_j^-(\tilde k_{\min}) = 1 - \frac{2 s( e^{-2sL} + 1)}{C_p (1- e^{-2sL} )}\delta^{1/5} = 1 - \frac{16 s( e^{-2sL} + 1)}{C_k^4 (1- e^{-2sL} )}\delta^{1/5}=: \tilde b,
\end{array}
$$
leading to the low frequency iteration matrix of the same form like in \Cref{eq:tlf1}.
By computing the spectral radius of this matrix for $J=2,3,4,\ldots$
subdomains, we get for small overlap $\delta$
\begin{equation}\label{eq:const2}
  \rho_J(\tilde{k}_{\min})  \sim  1 - \frac{16}{C_{k_2}^4} K_J  \delta^{1/3},
\end{equation}
with the same constant $K_J$ defined in \eqref{KJ}, and we obtain
$\rho_J(\tilde{k}_{\min})\sim
1-\frac{16K_J}{C_{k_2}^4}\delta^{1/5}$. By equating this with the high
frequency maximum $\rho(k^*)\sim 1 - 2C_{k_2} \delta ^{1/5}$ from
\eqref{HighFreqMaxVentcell} leads to $C_{k_2}=(8K_J)^{1/5}$.

For the two-sided Ventcell transmission conditions with $(p_1^*,q_1^*)
\sim ({C_{k_4}^8} \delta^{-1/9}, \frac{1}{C_{k_4}^2}\delta^{7/9})$,
$(p_2^*,q_2^*) = ({C_{k_4}^6} \delta^{-3/9},
\frac{1}{C_{k_4}^4}\delta^{5/9})$ we have
$$
\begin{array}{c}
\displaystyle \alpha_j^+(\tilde k_{\min}) = \alpha_j^-(k_{\min}) = \frac{2 s e^{-sL}}{C_{p_2} (1- e^{-2sL} )} \delta^{1/9} =  \frac{2 s e^{-sL}}{C_{k_4}^4 (1- e^{-2sL} )} \delta^{1/9} := \tilde a,\\
\displaystyle \beta_j^+(\tilde k_{\min}), \beta_{j+1}^-(k_{\min}) \in \{ \delta^{2/9} C_{k_4}^2 \tilde b, \frac{1}{\delta^{2/9} C_{k_4}^2} \tilde b\},\quad  \tilde b:=  1 - \frac{s( e^{-2sL} + 1)}{C_{k_4}^8 (1- e^{-2sL} )}\delta^{1/9},
\end{array}
$$
leading to the low frequency iteration matrix of the same form like in
\Cref{eq:tlf2} where again the couples $ \tilde b_+ \ne \tilde b_-$ can
vary along the diagonal but still lay in the set $\{ \delta^{2/9}
C_{k_4}^2 \tilde b, \frac{1}{\delta^{2/9} C_{k_4}^2} \tilde b\}$ which
does not change the eigenvalues of the matrix.  By computing the
spectral radius of this matrix for $J=2,3,4,\ldots$ subdomains we get
for small overlap $\delta$
\begin{equation}
  \rho_J(k_{\min}) \sim 1 - \frac{K_J}{C_{k_4}^8}\delta^{1/9}
\end{equation}
with the same constant $K_J$ from \Cref{KJ}, and equating with
$\rho(k^*) \sim 1 - 2C_{k_4} \delta ^{1/9}$ from
\eqref{HighFreqMaxVentcell2} we obtain
$C_{k_4}=\left(\frac{K_J}{2}\right)^{1/9}$.  We therefore get,
using Lemma \ref{GenericLemma2}, the following result for the $J$
subdomain decomposition:
\begin{theorem}[$J$ Subdomain Ventcell Optimization]
For $J$ subdomains, the best choice in the one sided Ventcell
transmission conditions is $p_j^+=p_{j}^-=p^*,\,q_j^+=q_{j}^-=q^*$,
and when the overlap $\delta$ goes to zero, we have
  \begin{equation}
     p^*\sim 2^{-3/5}K_J^{4/5} \delta^{-1/5}, \, q^* \sim 2^{-1/5}K_J^{-2/5} \delta^{3/5}, \quad
    \rho \sim 1 -  2^{8/5}K_J^{1/5} \delta^{1/5} + {\cal O}(\delta^{2/5}),
  \end{equation}
  with the constant $K_J$ from \Cref{KJ}. For the two-sided Ventcell
  transmission conditions the best choice is $(p_j^+,q_j^+),
  (p_{j+1}^-,q_{j+1}^-) \in \{ (p_1^*,q_1^*),(p_2^*,q_2^*)\}$,
  $(p_j^+,q_j^+)\ne (p_{j+1}^-,q_{j+1}^-) \, \forall j=1..,J-1$, with
  \begin{equation}
   (p_1^*,q_1^*) \sim \left(2^{-8/9} K^{8/9} \delta^{-1/9}, 2^{2/9} K^{-2/9} \delta^{7/9}\right),\,
   (p_2^*,q_2^*)  \sim \left(2^{-2/3} K^{2/3} \delta^{-1/3}, 2^{4/9} K^{-4/9} \delta^{5/9} \right),
  \end{equation}
  leading to $\rho^* \sim 1 - 2^{8/9} K_J^{1/9} \delta ^{1/9}$.
\end{theorem}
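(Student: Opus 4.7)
The plan is to combine the generic Ventcell asymptotics of Lemma \ref{GenericLemma2} with the low frequency spectral radius computation that was carried out in the discussion preceding the theorem. First, I would invoke Lemma \ref{GenericLemma2} to record that for one sided Ventcell conditions the optimal parameters must be of the form $p^*\sim\frac{C_{k_2}^4}{8}\delta^{-1/5}$, $q^*\sim\frac{2}{C_{k_2}^2}\delta^{3/5}$, with associated convergence factor $\rho\sim 1-2C_{k_2}\delta^{1/5}+\mathcal{O}(\delta^{2/5})$, and that in the two-sided case the alternating pairs $(p_j^+,q_j^+)$ and $(p_{j+1}^-,q_{j+1}^-)$ must take the forms listed in \eqref{HighFreqMaxVentcell2}, with $\rho\sim 1-2C_{k_4}\delta^{1/9}+\mathcal{O}(\delta^{2/9})$. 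At this stage the constants $C_{k_2}$ and $C_{k_4}$ are still free, and everything reduces to identifying them in terms of $J$.

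Next, I would pin down these constants by the equioscillation condition between the high frequency maximum and the low frequency value $\rho_J(\tilde k_{\min})$. Substituting the parametric forms into the formulas \eqref{eq:alphabeta} for $\alpha_j^{\pm},\beta_j^{\pm}$ and expanding to leading order in $\delta$ produces exactly the low frequency iteration matrices $T_{lf,1par}$ from \eqref{eq:tlf1} in the one sided case and $T_{lf,2par}$ from \eqref{eq:tlf2} in the two-sided case, with the entries $\tilde a$ and $\tilde b$ (or $\tilde b_\pm$) displayed in the paragraph just before the theorem. The key observation is that these matrices have exactly the same structure as in the Robin case, so the spectral radius is again governed by the trigonometric quantity $2\cos(\pi/J)$, producing the identity
\begin{equation*}
  \rho_J(\tilde k_{\min})\sim 1-\frac{16 K_J}{C_{k_2}^4}\delta^{1/5}\quad\text{and}\quad
  \rho_J(\tilde k_{\min})\sim 1-\frac{K_J}{C_{k_4}^8}\delta^{1/9},
\end{equation*}
with the same constant $K_J$ from \eqref{KJ} in both cases.

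Equating each of these low frequency expressions with the corresponding high frequency equioscillation value from Lemma \ref{GenericLemma2} yields the two algebraic conditions
\begin{equation*}
  \frac{16 K_J}{C_{k_2}^4}=2C_{k_2}\quad\text{and}\quad \frac{K_J}{C_{k_4}^8}=2C_{k_4},
\end{equation*}
from which $C_{k_2}=(8K_J)^{1/5}$ and $C_{k_4}=(K_J/2)^{1/9}$. Substituting these values back into the parametric forms delivered by Lemma \ref{GenericLemma2} produces the claimed asymptotic expressions for $p^*,q^*$ in the one sided case and for $(p_1^*,q_1^*),(p_2^*,q_2^*)$ in the two-sided case, together with the announced convergence factors $\rho\sim 1-2^{8/5}K_J^{1/5}\delta^{1/5}$ and $\rho\sim 1-2^{8/9}K_J^{1/9}\delta^{1/9}$.

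The main obstacle, and the only step that is not a one-line substitution, is the spectral radius computation for the block iteration matrices $T_{lf,1par}$ and $T_{lf,2par}$ for general $J$. I would handle it exactly as in the Robin case: show by an $e^{\lambda L}$ expansion of the denominators in \eqref{eq:alphabeta} that the entries collapse to $\tilde a,\tilde b$ as claimed; then use the fact that the block structure with alternating $\tilde b_+,\tilde b_-$ positions does not alter the eigenvalues, and that the resulting matrix is unitarily equivalent to a tridiagonal matrix whose largest eigenvalue is given by $2\cos(\pi/J)$ times an appropriate combination of $\tilde a$ and $\tilde b$. This is where the constant $2\cos(\pi/J)$ in $K_J$ enters, and it is reassuring that it coincides with the constant already identified in the Robin analysis \eqref{eq:const}, so no new combinatorics is required.
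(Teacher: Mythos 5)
Your proposal is correct and follows essentially the same route as the paper: invoke Lemma \ref{GenericLemma2} for the parametric forms in terms of $C_{k_2}$ and $C_{k_4}$, expand $\alpha_j^{\pm},\beta_j^{\pm}$ at $\tilde k_{\min}$ to obtain the low frequency matrices of the form \eqref{eq:tlf1} and \eqref{eq:tlf2}, extract the same constant $K_J$ via the $2\cos(\pi/J)$ spectral radius pattern, and equioscillate to get $C_{k_2}=(8K_J)^{1/5}$ and $C_{k_4}=(K_J/2)^{1/9}$. The final substitutions all check out, so nothing further is needed.
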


Since the asymptotic convergence factors for Ventcell transmission
  conditions depend on the same constant $K_J$ as for Robin
  transmission conditions, the one-level methods with Ventcell
  transmission conditions have the same scalability properties
  described in Subsection \ref{ScalabilitySubSec} for Robin
  transmission conditions, and we will illustrate this as well in the
  following section with numerical experiments.

\section{Numerical results}


For our tests, we start with a decomposition into four overlapping
domains, that can be uniform (four rectangles) or a more general
decomposition from METIS as shown in Figure
\ref{eq:4subdomains}.
\begin{figure}
\centering
\includegraphics[width=0.48\textwidth]{./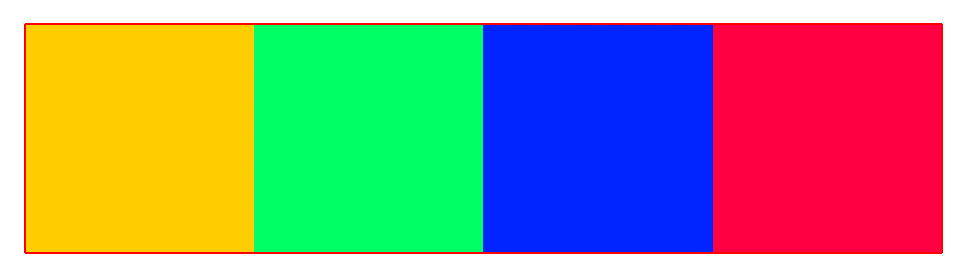}
\includegraphics[width=0.48\textwidth]{./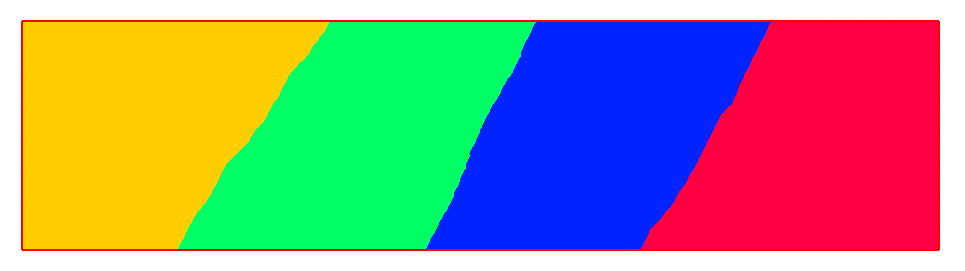}
\caption{Decomposition into four subdomains: uniform (left) and
  METIS (right).}
 \label{eq:4subdomains}
\end{figure}
Throughout this section we fix the values of the
parameters to $\eta =\varepsilon = 1$. To discretise we use a uniform
square grid in each direction and triangulate to form P1 elements. We
use an overlap of size $\delta=2h$, with $h$ being the mesh size. All
computations are performed using FreeFem (\url{http://freefem.org/}).

\subsection{Optimised Schwarz methods as solvers: asymptotic behaviour}

In the first series of tests we consider one and two-sided Robin and
Ventcell transmission conditions, and we increase locally the number
of degrees of freedom, which leads to a decreasing value of the mesh
size $h$, and thus the overlap $\delta$. We show results for the
iterative version of the classical Restricted Additive Schwarz method
(RAS) and the optimised versions of the algorithm using their ORAS
(Optimized Restricted Additive Schwarz) implementation
\cite{gander2008schwarz}. In Table \ref{tab:comp1} we report the
iteration count in order to achieve a relative discrete
$L^2$-norm error reduction of $10^{-6}$ (the numbers in
parentheses correspond to the METIS decompositions). In all tests we
start with a random initial guess in order to ensure that all
frequencies are present in the error.
\begin{table}
 \centering
\begin{tabular}{|c|c|c|c|c|c|}
\hline
$h$                   & RAS                & Robin 1 & Robin 2 & Ventcell 1 & Ventcell 2 \\ \hline
$\frac{1}{50}$   &    53 (74)         & 11 (12)            & 10 (16)            &   7 (8)                 &  9 (11)\\ 
$\frac{1}{100}$ &  105 (132)       & 13 (15)           & 12 (20)            &   8 (8)                  &  9 (12) \\ 
$\frac{1}{200}$ &  207 (270)      & 17 (19)           & 14 (18)            &  9 (10)                 & 10 (13) \\ 
$\frac{1}{400}$ &  412 (508)       & 22 (25)           & 16 (23)            & 11 (11)                & 11 (15) \\ 
$\frac{1}{800}$ &  820 (960)       & 27 (32)           & 20 (26)            & 12 (14)               & 13 (16) \\ 
$\frac{1}{1600}$ & 1650 (1810)  & 32 (39)          & 23 (29)             & 14 (17)              & 14 (17) \\ 
 \hline
\end{tabular}
\caption{RAS vs. One and two-sided Robin and Ventcell conditions for refined meshes}
  \label{tab:comp1}
\end{table}
We see that in the case of the classical Schwarz algorithm (RAS) the
iteration count increases linearly when the mesh size representing the
overlap is decreased linearly, leading to an important number of
iterations for very fine meshes. This statement is true both in the
case of uniform and METIS decompositions, and the iteration count for
the latter is slightly larger. These results can be greatly improved
by using optimised versions of the algorithm, and we notice a
progressive improvement of the behaviour, first with Robin and then
with Ventcell transmission conditions, as predicted by our analysis.

To see the asymptotic behavior, we plot the iteration counts in Figure
\ref{fig:asymiter}.
\begin{figure}
\centering
\includegraphics[width=0.48\textwidth]{./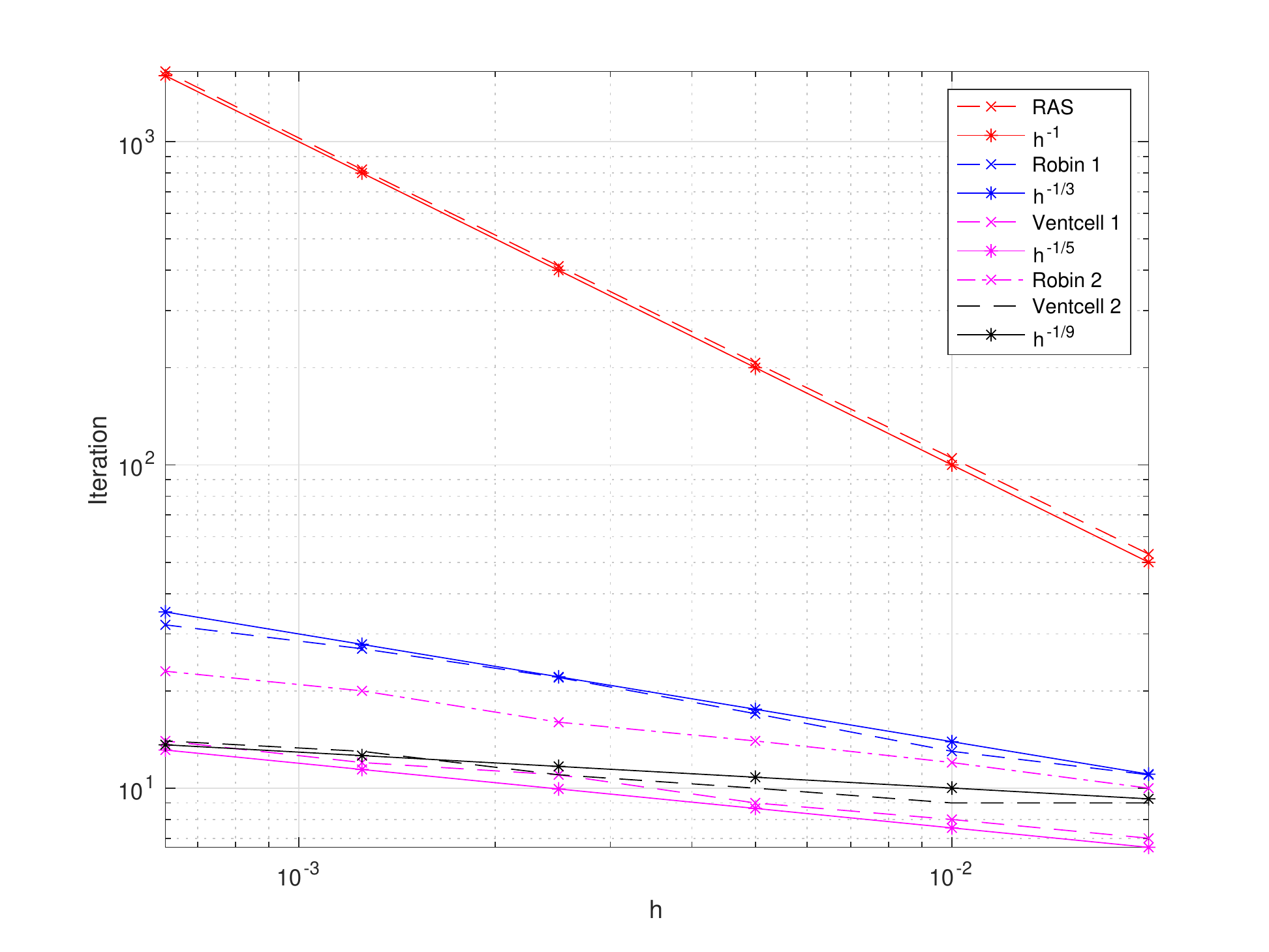}
\includegraphics[width=0.48\textwidth]{./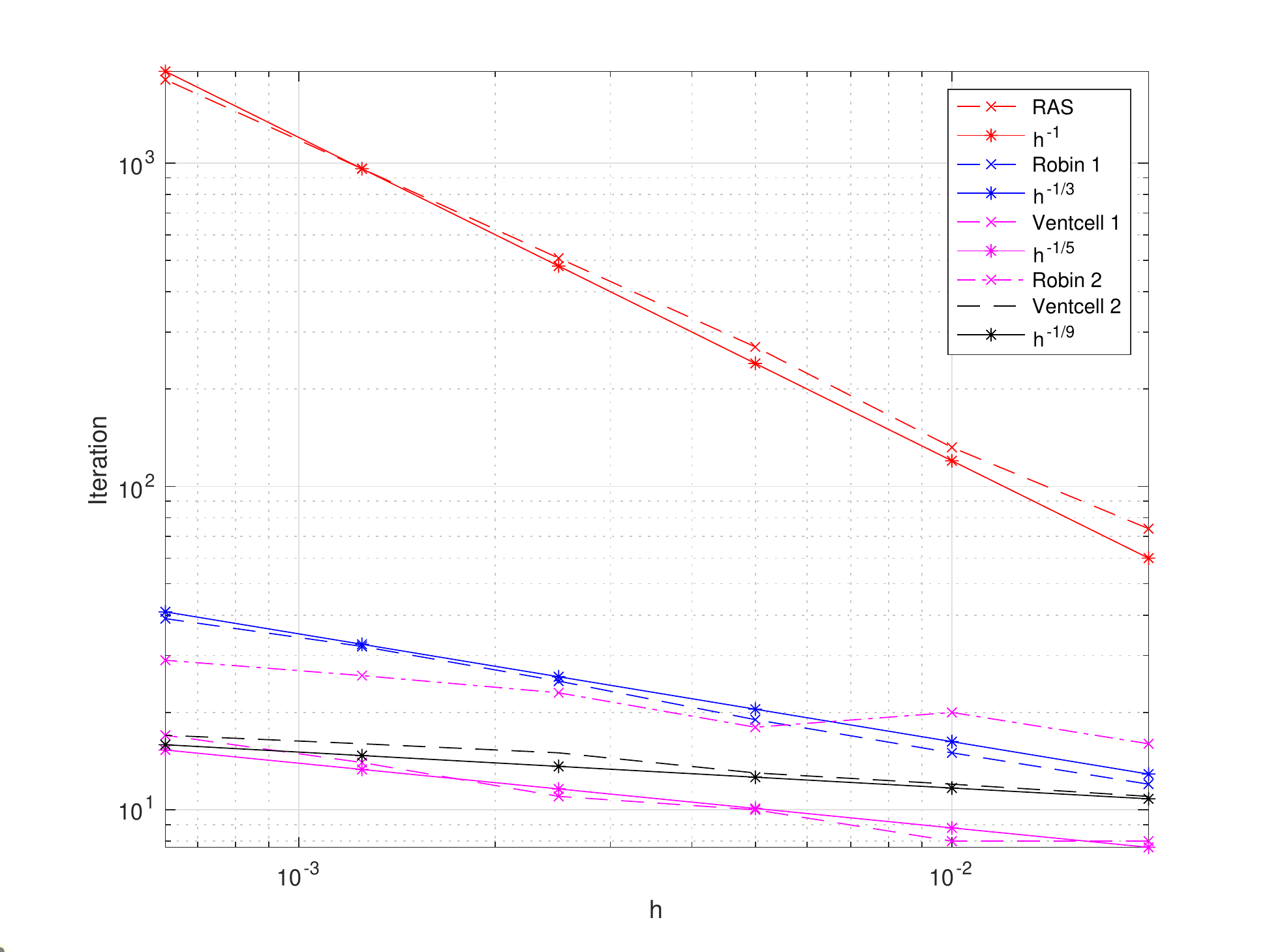}
\caption{Iteration count depending on the mesh size for classical
  Dirichlet (RAS), Robin and Ventcell transmission conditions.}
 \label{fig:asymiter}
\end{figure}
We see that the results are consistent with the theory, i.e. in the
case of Robin 1 (one-sided) transmission conditions, the iteration
count increases like $h^{-1/3}$ and in the case of Ventcell 1 like
$h^{-1/5}$. For their two-sided variants the iteration count increases
like $h^{-1/5}$ for Robin 2 and like $h^{-1/9}$ for Ventcell 2. This
behaviour holds both for uniform and METIS decompositions. Note that
while the asymptotic behaviour for two-sided Robin and one-sided
Ventcell transmission conditions is the same, the iteration count is
different, since the constants involved in the theoretical estimates
are different. Also, in order to see the full benefit of two-sided
Ventcell (Ventcell 2) conditions, highly refined meshes are needed
leading to sufficiently large problems.

\subsection{Optimised Schwarz methods as solvers: scalability}\label{NumSubSecScalability}
  
We now perform the same kind of experiments but with an increasing
number of subdomains and a strip-wise decomposition where we consider
again the case of uniform and METIS decompositions.  We keep the size
of the local subdomains fixed and we choose $h=1/100$, see Table
\ref{tab:comp2}.
\begin{table}
 \centering
\begin{tabular}{|c|c|c|c|c|c|}
\hline
$J$   & RAS                & Robin 1 & Robin 2 & Ventcell 1 & Ventcell 2 \\ \hline
$2$   &  97 (127)        &    13 (15)        &    12 (14)        &     7 (9)     &   9 (10) \\ 
$4$   &  105 (132)       &     13 (15)      &   12 (20)        &    8 (8)    &  9 (12)  \\ 
$8$   &  107 (141)      &    14 (15)        &     12 (14)         &     8   (9)     &  10 (11)\\ 
$16$ &  108 (132)    &      14 (15)       &       12 (25)      &      8   (9)      & 10 (12)  \\ 
$32$ &    108 (138)     &    14 (15)       &       12 (25)       &     8   (9)     &  10 (12) \\ 
 \hline
\end{tabular}
\caption{RAS vs. one and two-sided Robin and Ventcell conditions for a
  strip-wise decomposition into $J$ subdomains (fixed subdomain
  size).}
  \label{tab:comp2}
\end{table}
In the case of the classical Schwarz method (RAS), we notice that
after a slight increase in iterations when the number of subdomains
grows, the iteration count stabilises. This is consistent with the
theoretical results and shows that the one-level method is weakly
scalable in this setting: the iteration count remains constant when
the number of subdomains is increased and the size of the subdomains
is kept fixed. The optimised one-level variants are scalable as well,
as shown in Subsection \ref{ScalabilitySubSec}, the iteration counts
are much lower, and remain constant almost from the very beginning
when the number of subdomains increases. Note that we cannot control
the exact size of the subdomains in the case of METIS decompositions,
which explains the slight variations in the iteration counts for METIS
decompositions.
  

In a second series of tests, we keep now the size of the global domain
fixed to $[0,1]^2$, and chose the mesh size equal to $h = 1/512$. We
increase the number of subdomains in one direction in order to obtain
a strip-wise decomposition. In this case, the domains will become
thinner and thinner as shown in Figure \ref{eq:thinsub}.
\begin{figure}
\centering
\includegraphics[width=0.24\textwidth]{./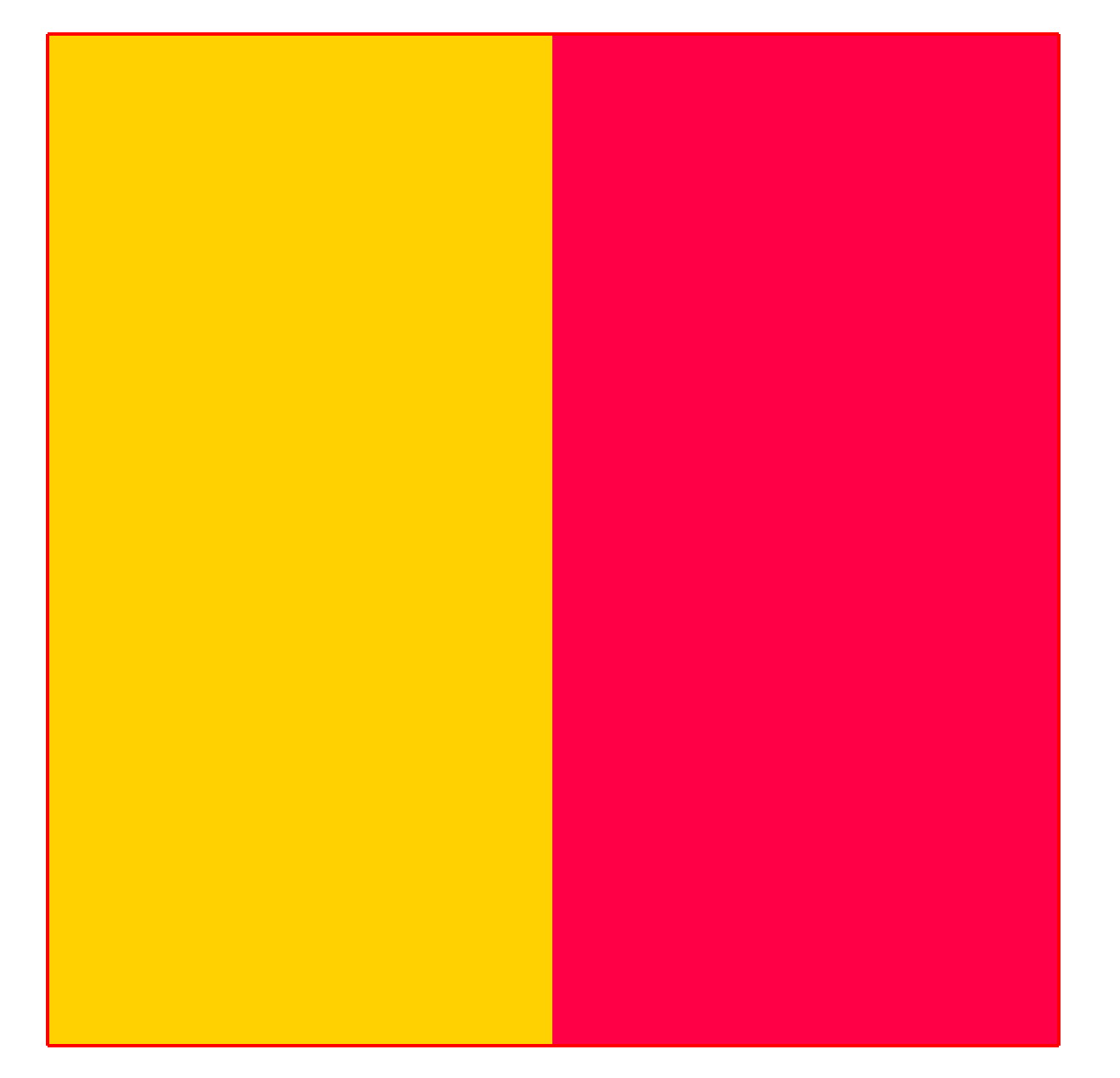}
\includegraphics[width=0.24\textwidth]{./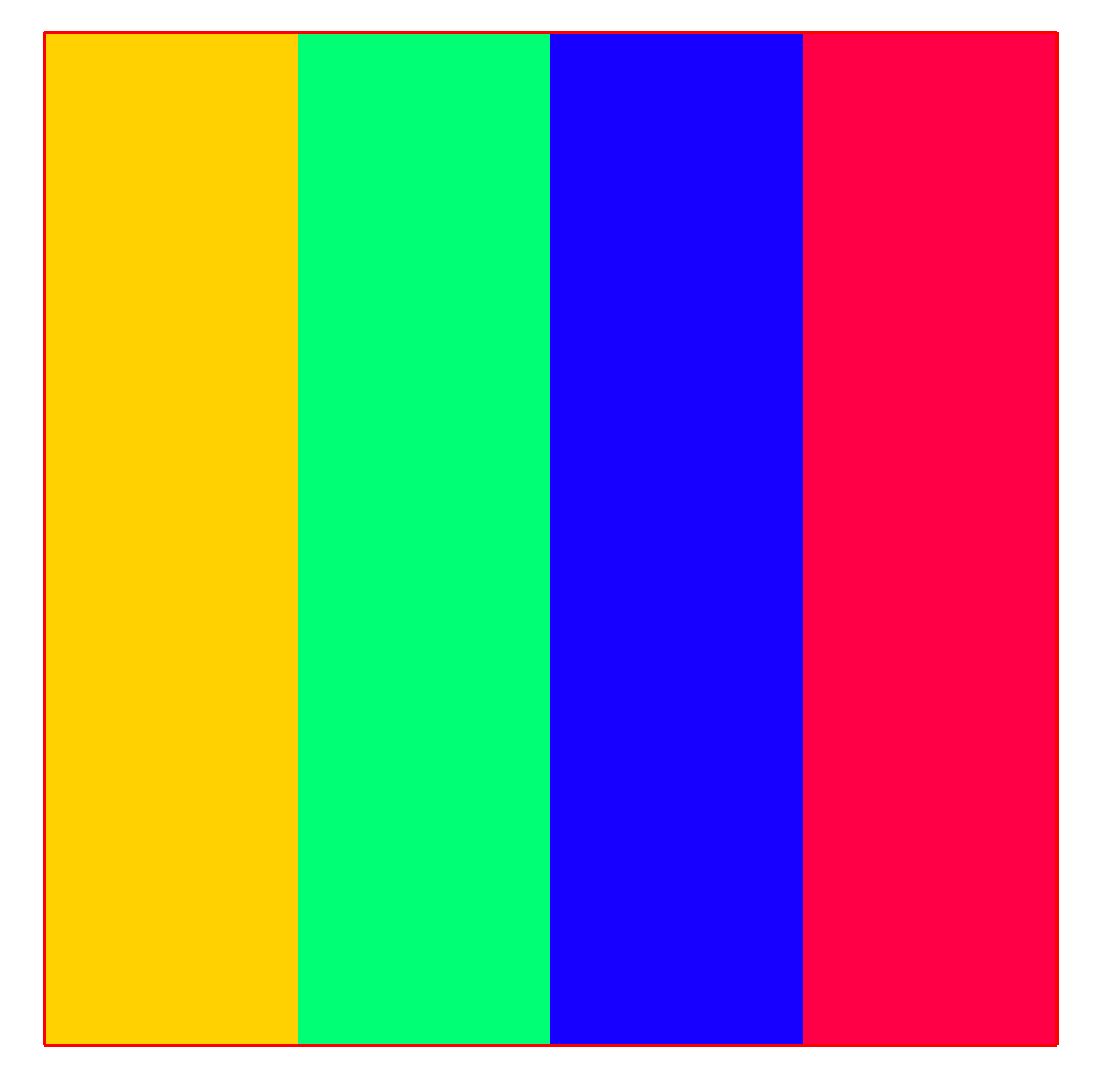}
\includegraphics[width=0.24\textwidth]{./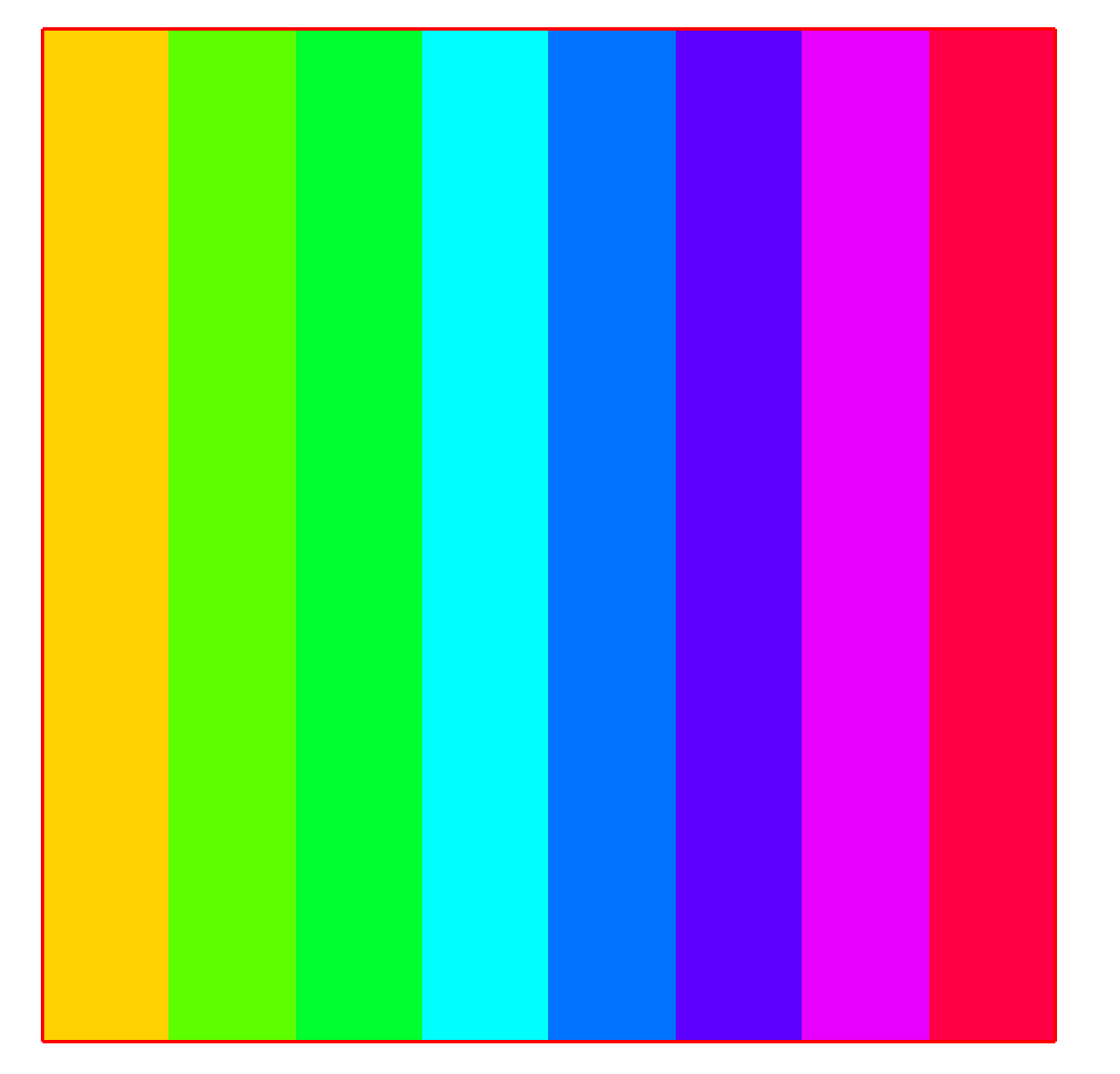}
\includegraphics[width=0.24\textwidth]{./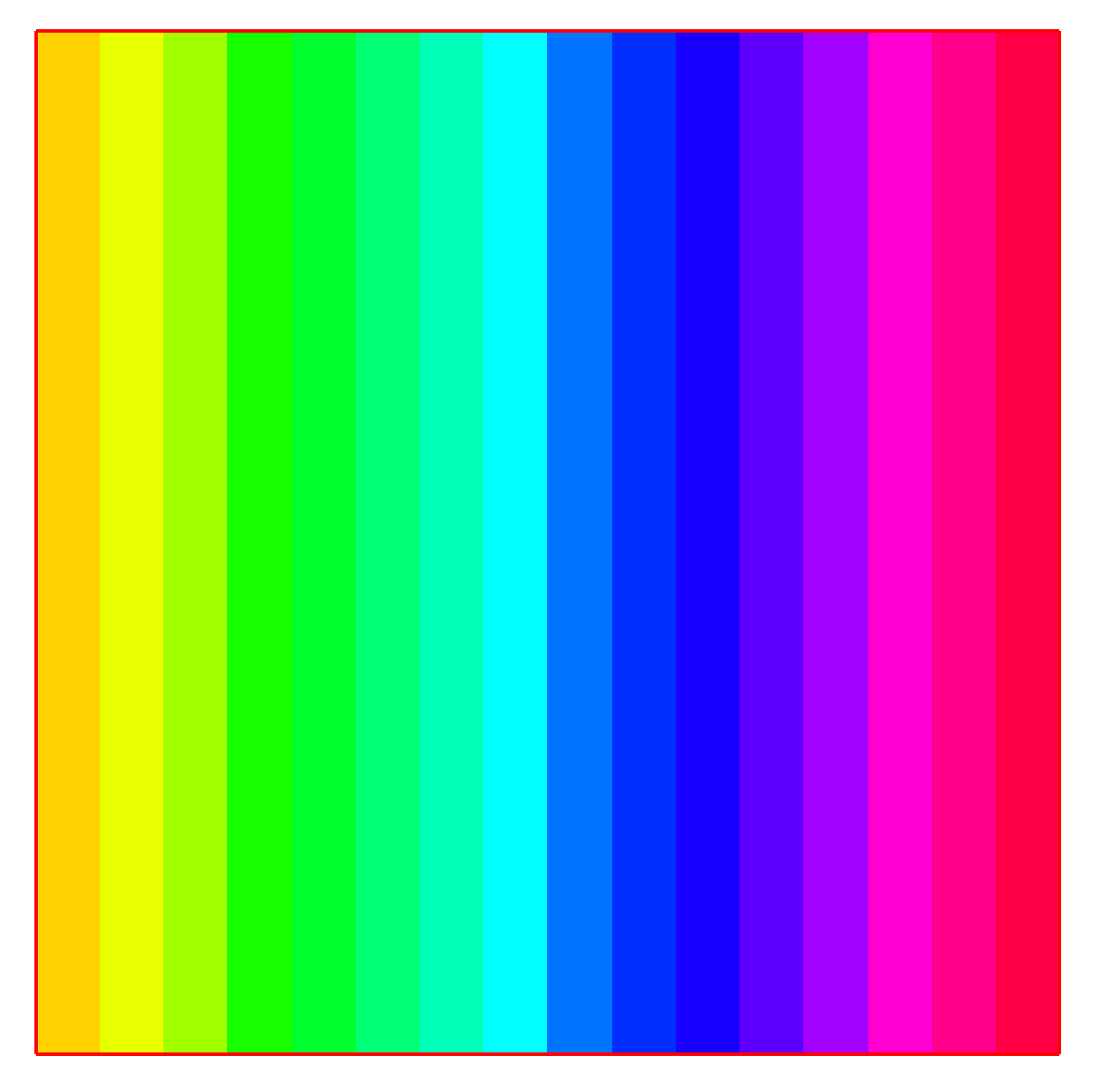}
\caption{Decomposition into many subdomains of decreasing width.}
 \label{eq:thinsub}
\end{figure}
We see in Table \ref{tab:comp3}
\begin{table}
 \centering
\begin{tabular}{|c|c|c|c|c|c|}
\hline
$J$   & RAS                & Robin 1 & Robin 2 & Ventcell 1 & Ventcell 2 \\ \hline
$2$   &    466         &    22    &     17     &   11    &  12  \\ 
$4$   &     731        &    32    &     27     &    12   &  25  \\ 
$8$   &  $> 1000$  &    56    &     -   &  28    &  -\\ 
$16$ &  $> 1000$  &  103    &     - &      62      & -  \\ 
$32$ &  $> 1000$  &  196    &     -   &    132      & - \\ 
 \hline
\end{tabular}
\caption{RAS vs. one and two-sided Robin and Ventcell conditions for a
  strip-wise decomposition into $J$ subdomains with decreasing
  subdomain size.}
  \label{tab:comp3}
\end{table}
that these one level methods are no longer scalable, as predicted in
Subsection \ref{ScalabilitySubSec}. The iteration count for RAS grows
rapidly with the number of domains to exceed 1000 iterations. The
iteration numbers for Robin 1 and Ventcell 1 are much lower, but also
approximately double when the number of subdomains doubles, as
predicted by the theoretical constants $K_J$ in \eqref{KJ} and
$K_{\infty}$ in \eqref{eq:cnr} when the subdomain size $L$ becomes
small. Furthermore, when two-sided conditions are used, the method is
no longer convergent as iterative solver when we increase the number
of subdomains in this setting, which changes however when using the
methods as preconditioners in the next section.

\subsection{Optimised Schwarz methods as preconditioners}
\label{NumSubSecGMRES}

We solve the discretised problem using GMRES where the parallel
Schwarz method with Robin or Ventcell conditions is used as a
preconditioner. In particular, we use right-preconditioned GMRES and
terminate when a relative residual tolerance of $10^{-6}$ is
reached. The preconditioner, which arises naturally as the discretised
version of the parallel Schwarz method we have studied, is known as
the one-level optimised restricted additive Schwarz (ORAS)
preconditioner. This ORAS preconditioner is given by
\begin{align*}
\mathbf{M}^{-1} = \sum_{i=1}^{N} \mathbf{R}_i^T \mathbf{D}_i \mathbf{\tilde{A}}_i^{-1} \mathbf{R}_i,
\end{align*}
where $\left\{\mathbf{R}_i\right\}_{1 \le i \le N}$ are the Boolean
restriction matrices from the global to the local finite element
spaces and $\left\{\mathbf{D}_i\right\}_{1 \le i \le N}$ are local
diagonal matrices representing a partition of unity. The key
ingredient of the ORAS method is that the local subdomain matrices
$\{\mathbf{\tilde{A}}_i\}_{1 \le i \le N}$ incorporate more
efficient Robin or Ventcell transmission conditions.

We now perform exactly the same kind of experiments as in the previous
subsections, but test the performance of the preconditioner instead of
the stationary iterative solver. We start by the strip-wise
decomposition into four subdomains like in Figure
\ref{eq:4subdomains}, and we refine the mesh locally in each
subdomain. In Table \ref{tab:comp4}
\begin{table}
 \centering
\begin{tabular}{|c|c|c|c|c|c|}
\hline
$h$                   & RAS            & Robin 1 & Robin 2 & Ventcell 1 & Ventcell 2 \\ \hline
$\frac{1}{50}$   &  15 (21)       & 8 (8)      &    9 (11)     &      6 (6)    & 7 (8)  \\ 
$\frac{1}{100}$ &  21 (28)       & 9 (10)    &   10 (13)   &      6 (7)     &  7 (9) \\ 
$\frac{1}{200}$ &  28 (41)       & 10 (11)   &   11 (13)   &      7 (7)    &  7 (9) \\ 
$\frac{1}{400}$ &   39 (57)      & 12 (14)   &   13 (15)   &      8 (8)    &  9 (10)\\ 
$\frac{1}{800}$ &  56 (82)       &  13 (15)  &    14 (17)   &     9 (9)    & 9 (11)  \\ 
$\frac{1}{1600}$ &  80 (112)  &   15 (17)   &   15 (17)   &     10 (11)  &  9 (11)\\ 
 \hline
\end{tabular}
\caption{GMRES preconditioned by RAS and ORAS with Robin and Ventcell
  conditions for refined meshes.}
  \label{tab:comp4}
\end{table}
we report the iteration count of preconditioned GMRES in order to
achieve a relative discrete $L^2$-norm error reduction of $10^{-6}$ (the numbers in
parentheses correspond to METIS decompositions). Again we start with a
random initial guess in order to ensure that all frequencies are
present in the error. We see that the iteration count is less
sensitive to the choice of transmission conditions, but the hierarchy
of the methods is preserved, and iteration counts depend for the
optimized methods only very weakly on the mesh size that represents
the overlap. We repeat next the weak scaling experiments from Tables
\ref{tab:comp2} and \ref{tab:comp3}, where the methods are used as
preconditioners. We see in Table \ref{tab:comp5}
\begin{table}
 \centering
\begin{tabular}{|c|c|c|c|c|c|}
\hline
$J$   & RAS          & Robin 1 & Robin 2 & Ventcell 1 & Ventcell 2 \\ \hline
$2$   &    17 (21)   &   8 (9)     &  9 (11)   &  6 (7)   &   6 (8) \\ 
$4$   &   21 (28)    &   9 (10)   &  10 (13) &  6 (7)   &   7 (9)  \\ 
$8$   &   24 (31)    &   9 (10)   &  10 (11)  &  6 (6)   &   7 (8)\\ 
$16$ &    24 (29)   &    9 (10)   &  10 (15)  &  6 (6)  &   7 (10)\\ 
$32$ &    24 (32)   &    9 (10)   &  10 (13)  &  6 (7)  &   7 (8) \\ 
 \hline
\end{tabular}
\caption{GMRES preconditioned by RAS and ORAS with Robin and Ventcell
  conditions for a strip-wise decomposition into $J$ subdomains (fixed
  subdomain size).}
  \label{tab:comp5}
\end{table}
that also the preconditioners scale very well with the increase of the
number of subdomains when the size of the subdomains is kept fixed.
However, when the size of the global domain is fixed and the subdomain
size decreases when their number increases, we see in Table
\ref{tab:comp6}
\begin{table}
 \centering
\begin{tabular}{|c|c|c|c|c|c|}
\hline
$J$   & RAS  & Robin 1 & Robin 2 & Ventcell 1 & Ventcell 2 \\ \hline
$2$   &  39  &   12  &   13   &   7   &  9  \\ 
$4$   &  48  &   15  &  19   &    9  &   12 \\ 
$8$   &  62  &   20  &   24   &   22  &  20 \\ 
$16$ &  79  &   40  &  41  &   50   &  48 \\ 
$32$ & 110 &   81  &  76   &   102  &  100 \\ 
 \hline
\end{tabular}
\caption{RAS vs. One and two-sided Robin and Ventcell conditions for a strip-wise decomposition into $J$ subdomains (decreasing subdomain size).}
  \label{tab:comp6}
\end{table}
that even though all methods are convergent, they are again not
scalable any more, as expected from our analysis, see Subsection
\ref{ScalabilitySubSec}. We also see that in this setting of thinner
and thinner subdomains, the difference of performance between the
classical and optimised preconditioners is becoming less and less
pronounced.

\subsection{General decomposition into subdomains}

We next test our new optimized Schwarz methods also in a setting for
which we do not yet have a convergence analysis, namely two
dimensional uniform
and METIS decompositions into 4 subdomains including cross points,
like in Figure \ref{eq:16sub}.
 \begin{figure}
\centering
\includegraphics[width=0.4\textwidth]{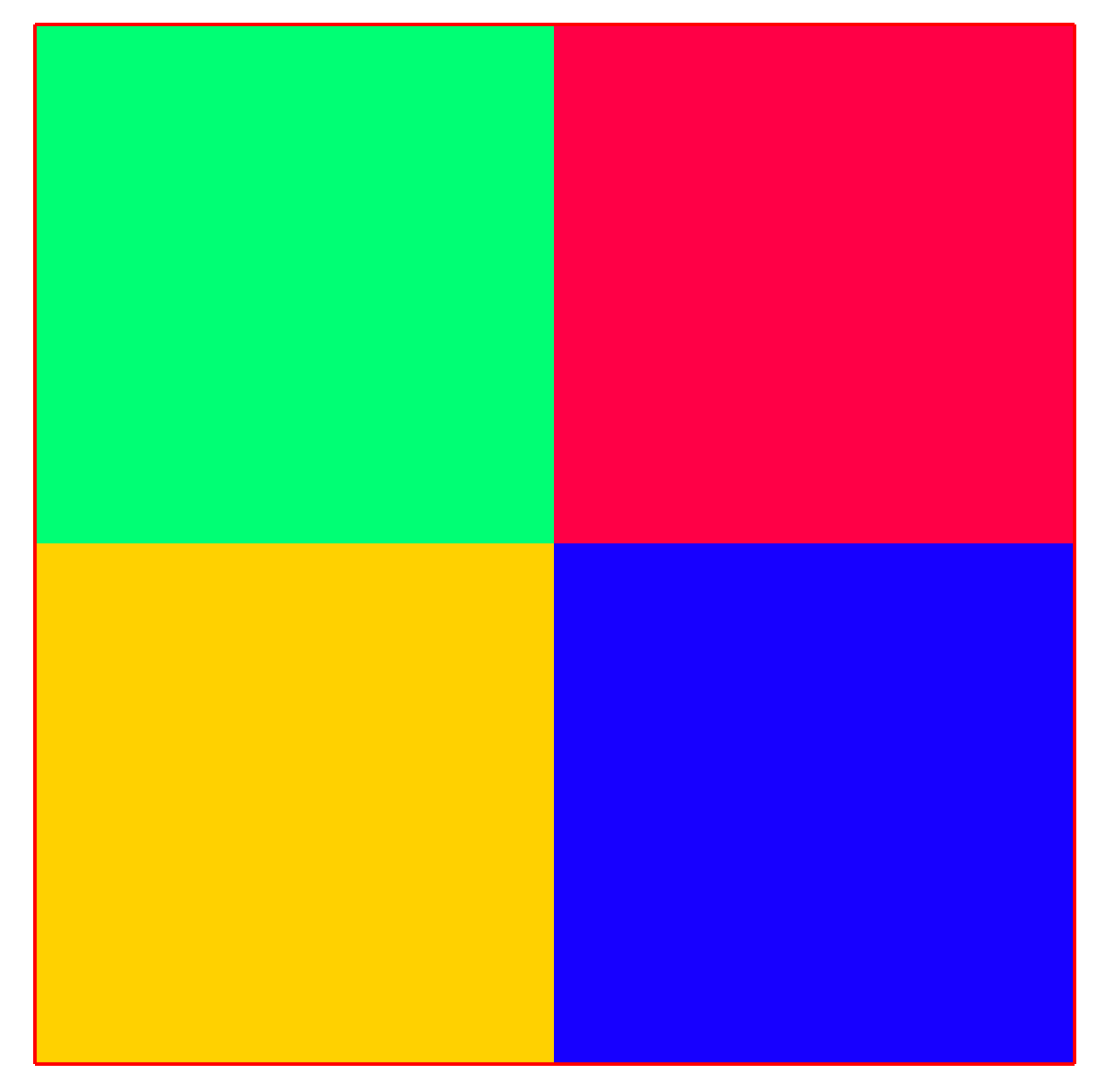}
\includegraphics[width=0.4\textwidth]{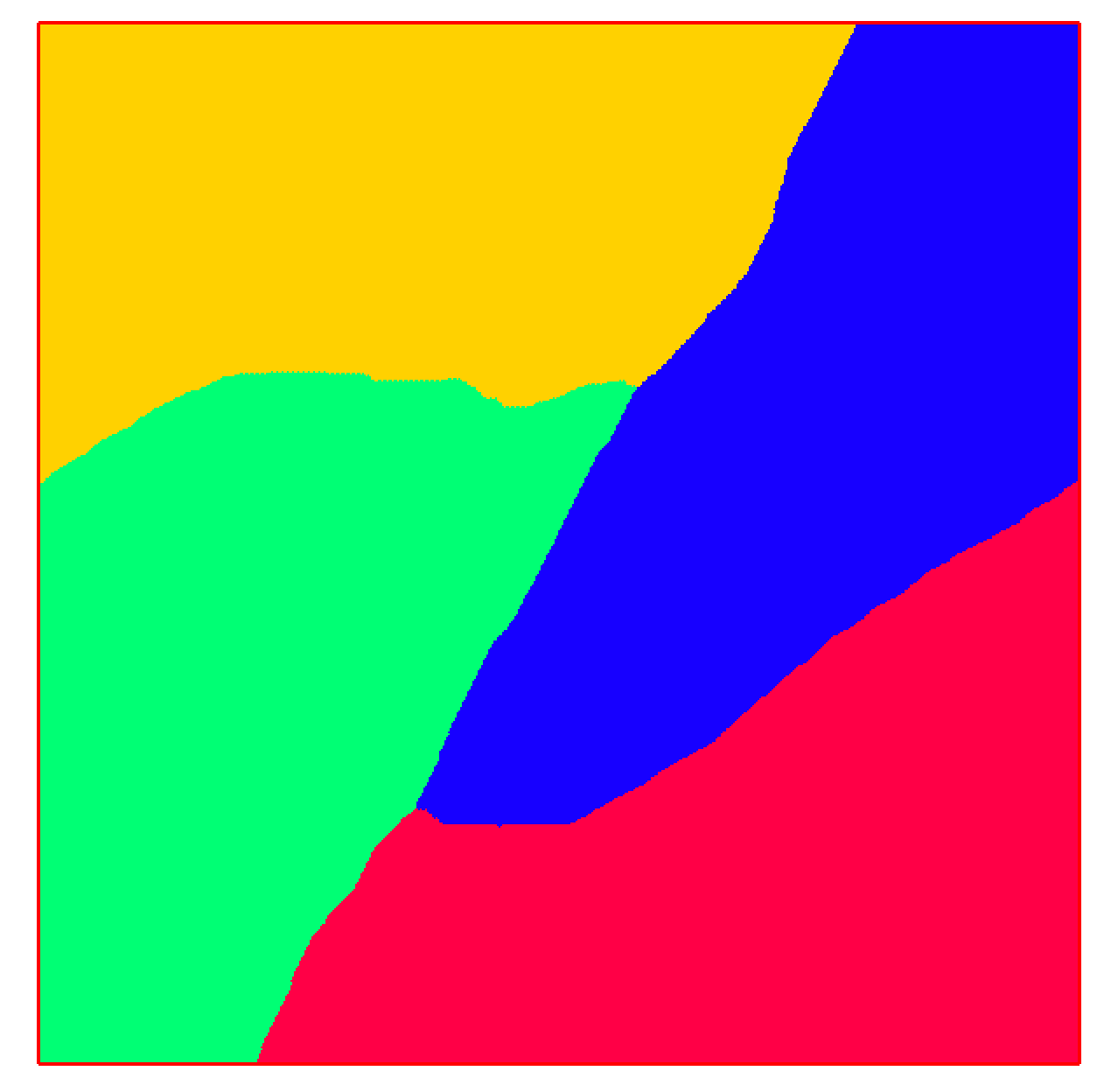}
\caption{Decomposition into 16 subdomains (uniform and METIS).}
 \label{eq:16sub}
\end{figure}
We show the iteration counts needed by the various preconditioned
GMRES methods in Table \ref{tab:comp8}.
\begin{table}
 \centering
\begin{tabular}{|c|c|c|c|c|c|}
\hline
$h$                   & RAS       & Robin 1 & Robin 2 & Ventcell 1 & Ventcell 2 \\ \hline
$\frac{1}{50}$   &  15 (19)  & 14 (13)  &  16 (15) & 12 (13)  &   13 (14)\\ 
$\frac{1}{100}$ &  22 (29)  & 17 (19)  &  20 (23) & 14 (14)  &   14 (16) \\ 
$\frac{1}{200}$ &  30 (41)  & 19 (19)  &  22 (25) & 16 (16)  &   17 (18)\\ 
$\frac{1}{400}$ &  44 (62)  & 23 (23)  &  28 (30)    & 18 (17)  &  19 (20)\\ 
$\frac{1}{800}$ &  63 (88)  & 28 (29)  &  35 (35)   &  20 (20)  &   21 (22)\\ 
 \hline
\end{tabular}
\caption{RAS vs. one and two-sided Robin and Ventcell conditions used
  as preconditioners for GMRES for refined meshes.}
  \label{tab:comp8}
\end{table}
We see that again the optimized variants perform much better than
classical RAS, and also the asymptotic dependence on the mesh size
representing the overlap is much weaker, so that for larger and larger
problems the gain in lower iteration counts is becoming more and more
substantial. For example for mesh size $h=\frac{1}{800}$, Robin
transmission conditions reduce the iteration count by more than a
factor two compared to RAS, and Ventcell transmission conditions by a
factor three, at the same cost per iteration.

\section{Conclusions}

We have shown for the first time that it is possible to optimize
  transmission conditions for many subdomain decompositions in
  optimized Schwarz methods. To do so, an essential ingredient was an
  asymptotic approximation of the convergence factor, and our analysis
  allowed us to precisely characterize the convergence dependence on
  the number of subdomains. Using a new technique of limiting spectra,
  we could even study the case of an infinite number of subdomains,
  leading to a new proof of scalability of these methods for specific
  strip decompositions for general complex diffusion problems. We also
  optimized for the first time two-sided Ventcell transmission
  conditions, which led to a new and much weaker asymptotic dependence
  on the overlap size than all earlier know optimized Schwarz methods.
  We illustrated our theoretical results with numerical experiments,
  including cases not covered by our analysis.

  \bibliographystyle{abbrv}
  \bibliography{paper}

\end{document}